\documentclass[12pt]{amsart}
\usepackage[utf8]{inputenc}

\usepackage{amsmath,amsfonts,euscript,amssymb,amsthm,a4,times,color}

\usepackage[dvipsnames]{xcolor}

\usepackage{amssymb,amsmath,amsfonts,amsthm,latexsym,enumerate, esint, manyfoot}
\usepackage{graphics,graphicx}
\usepackage[english]{babel}

\usepackage[colorlinks,linkcolor = blue,citecolor = red,pagebackref=false]{hyperref}
\usepackage{ulem}
\usepackage{amsfonts}
\usepackage{amsxtra}
\usepackage[abbrev]{amsrefs}
\usepackage{mathtools}
\usepackage{tikz-cd}
\usepackage{float}

\newcommand{\dive}{\mathrm{div}\,}

\newcommand{\loc}{\mathrm{loc}}

\def\real{\mathbb R}

\def\R{\mathbb R}

\def\proofof#1{\begin{proof}[Proof of #1]}

\def\d{\mathrm{d}}

\def\E{\mathcal{E}}
\def\dx{\d x}

\def\={^{\wedge}}

\def\eqn#1$$#2$${\begin{equation}\label#1#2\end{equation}}

\def\mir1{\mathcal L_1}

\def\loc{\operatorname{loc}}

\def\Sym{\real^{n\times n}_{\textup{sym}}}

\newtheorem{thm}{Theorem}[section]

\newtheorem{lem}[thm]{Lemma}

\newtheorem{prop}[thm]{Proposition}

\theoremstyle{definition}
\newtheorem{defn}[thm]{Definition}

\newcommand{\xint}[3]{{\setbox0=\hbox{$#1{#2#3}{\int}$}
   \vcenter{\hbox{$#2#3$}}\kern-.5\wd0}}
\newcommand{\mint}{\mathchoice
   {\xint\displaystyle\textstyle-}
   {\xint\textstyle\scriptstyle-}
   {\xint\scriptstyle\scriptscriptstyle-}
   {\xint\scriptscriptstyle\scriptscriptstyle-}
   \!\int}

\makeatletter \catcode`@=11
\newbox\tr@tto
\setbox\tr@tto=\hbox{{\count0=0\dimen0=-,9pt\dimen1=1,1pt\loop\ifnum
    \count0<11 \advance \count0 by1 \vrule width.51pt height\dimen1
    depth\dimen0\kern-0.17pt\advance\dimen0 by-0.05pt\advance\dimen1
    by0.1pt\repeat \loop\ifnum\count0<21\advance \count0 by1 \vrule
    width.6pt height\dimen1 depth\dimen0\kern-0.2pt \advance\dimen0
    by-0.1pt\advance\dimen1 by 0.05pt\repeat}}
\def\medint{\displaystyle\copy\tr@tto\kern-10.4pt\int}
\numberwithin{equation}{section}

\allowdisplaybreaks

\begin{document}

\title[Sobolev regularity of the symmetric gradient  of solutions to  $\phi$-Laplacian systems]{Sobolev regularity of the symmetric gradient  of solutions to a class of $\phi$-Laplacian systems }

\author{ Flavia Giannetti - Antonia Passarelli di Napoli } 
\address{Flavia Giannetti, Dipartimento di Matematica e Applicazioni "R.
Caccioppoli" \\ Universit\`{a} degli Studi di Napoli ``Federico
II", via Cintia - 80126 Napoli, Italy}
\email{flavia.giannetti@unina.it}

\address{Antonia Passarelli di Napoli, Dipartimento di Matematica e Applicazioni "R.
Caccioppoli" 
\\
Universit\`{a} degli Studi di Napoli ``Federico
II", via Cintia - 80126 Napoli, Italy}
\email{antpassa@unina.it}

\bigskip

\maketitle

\medskip
\begin{center}
{\footnotesize{\textit{This paper is dedicated to Gioconda Moscariello  on the
			occasion of her 70th birthday.}}}
\end{center}

\begin{abstract}
 {The paper deals with the second order regularity properties of the  weak solutions $u\in W^{1,\phi}(\Omega, \real^n)$ } of  systems of the form
  \begin{equation*}\label{equareg}
  	-\dive A(x,\E u)=f,
  \end{equation*}
   in a bounded domain $\Omega\subset\R^n$, $n>2$,
  where the  operator $ A(x,P)$  is Lipschitz continuous with respect to the $x$-variable and satisfies growth conditions  with respect to the second variable  expressed through a Young function $\Phi$. We prove the Sobolev regularity  of a function of the symmetric gradient $\E u$ that takes into account the nonlinear growth of the operator $A(x,P)$, {assuming that the force term $f$  belongs to a suitable Orlicz-Sobolev space. {The main result is achieved through some uniform higher differentiability estimates for solutions to a class of
approximating problems, constructed adding singular higher order perturbations to the  system.}}
\end{abstract}

\medskip

\noindent {\footnotesize {\bf AMS Classifications:} 
35J60; 35B65; 35Q35.}

\noindent {\footnotesize {\bf Key words.} {\it 
    Higher differentiability,  Orlicz growth, Symmetric gradient}}

\bigskip
\bigskip
\bigskip

\tableofcontents

\section{Introduction}
In this paper, we  study the regularity of  weak solutions to  systems of the type 
\begin{equation}\label{equareg}
    -\dive A(x,\E u)=f,\qquad\qquad \text{in}\,\,\Omega
  \end{equation}
 where $\Omega\subset\R^n$,   $n\ge2$, is a bounded domain, $\E u$ denotes the symmetric part of the gradient $Du $, i.e.
  \begin{equation*}
  	\E u:=\tfrac12\big(Du+(Du)^T\big),
  \end{equation*} and the  function
  $f:\Omega\to\R^n$ is a given measurable inhomogeneity. 
  
  \noindent The mapping
 $A:\Omega\times\Sym\to\Sym$, where $\Sym$ denotes the space of symmetric $n\times n$ matrices,   will be assumed continuous in $x$ and satisfying some ellipticity and growth conditions  with respect to the second variable   expressed through a Young function. More precisely,
  we shall assume that  $P\to A(x,P)\in C^1(\Sym\setminus\{0\})\cap C^0(\Sym)$ and that there exist positive constants $\nu, L_1,L_2, K$ such that
  \begin{align}
   \label{ip1}
  \big\langle A(x,P)-A(x,Q),P-Q\big\rangle
  &\ge \nu\phi''(|P|+|Q|)|P-Q|^2\\
  \label{ip2}
  |A(x,P)-A(x,Q)|&\le L_1\,\phi''(|P|+|Q|)|P-Q|,\\
    \label{ip2reg1}
   |A(x,P)|&\le L_2\,\phi'(|P|),\\
  \label{iplip0}
  |A(x,P)-A(y,P)|
  &\le
  K|x-y| \phi'(|P|),
 \end{align}
 for a.e. $x\in\Omega$ and every $P,Q\in\R^{n\times n}_{\mathrm{sym}}$. Here  $\phi\in C^2([0, \infty))$ is a Young function satisfying the $\Delta_2$ and $\nabla_2$ conditions that can be formulated introducing the so called Simonenko indices of $\phi'$ shifted by $1$, i.e.
\begin{align}
    \label{indices}
    i_\phi= \inf_{t>0} \frac{t\phi''(t)}{\phi'(t)} +1 \quad \text{and} \quad s_\phi= \sup_{t>0} \frac{t\phi''(t)}{\phi'(t)} +1.
\end{align} and requiring that
 \begin{align}
    \label{indices3}
    1<i_\phi \leq s_\phi <\infty.
\end{align}

Note that $\phi(t)$  is not necessarily a power function and therefore the operator $A(x,P)$ can be strongly nonlinear in the second variable.

Systems such as in \eqref{equareg}  naturally  arise  in mathematical models  describing numerous physical phenomena. One of the most relevant example is given by  systems describing the motion of  non-Newtonian fluids, which are 
incompressible, {subject to constant pressure} and for which the relation between  the deformation velocity $\E u$ and the deviatoric tensor stress $A(x,P)$
is nonlinear. 
\\
Moreover, it happens to run into  systems as in \eqref{equareg} when studying classical theories of plasticity and nonlinear elasticity. Here  we deal with higher differentiability properties of $\E u$  for weak solutions $u$ to the system \eqref{equareg}. It is worth pointing out that,
recently, many papers have been devoted to the study of this kind of regularity  of $\E u$. 
\\
For example, we  mention \cite{GPS2} and \cite{Cavagnoli},  where  $p$-Navier-Stokes  and $p$-Stokes systems have been examined, respectively  for $p\geq 2$ and  for $1<p< 2$, and \cite{GPS, CGPS}  where Navier-Stokes systems with an Orlicz growth of the type considered here, possibly of power type $\phi(t)=\frac{t^p}{p}$ for any $p\in (1,\infty)$, are studied. Interior and global regularity for nonlinear systems of the type
\begin{equation*}
    -\dive A(\E u)=f,
  \end{equation*}
  have been the subject of \cite{Ser} and \cite{BeDiening}, respectively under
   $p$-growth, $1<p<2$, and  $\phi$-growth.

The nonlinear nature of the problem implies that one cannot expect, in general, 
the existence of the second weak derivatives. However, the extra differentiability can be expressed through  
a  nonlinear function of the symmetric gradient that takes into account  the precise growth of  the operator $A(x,P)$.   Actually,  already in the case $\Delta_p u=0$,  the quantity $V(Du):=|Du|^{\frac{p-2}{2}}Du$ plays a key role in the regularity theory of the gradient of  weak solutions (see \cite{Giusti} and the references therein). \\
 Accordingly, here we shall express the regularity of $\E u$ through   the function $V:\Sym\to \Sym$ associated to $\phi$ defined  by
\begin{equation}
	\label{eq:V-def}
	V(P):=
	\left\{
	\begin{array}{ll}
		\sqrt{\frac{\phi'(|P|)}{|P|}}\,P &\mbox{ if $P\neq0$}\\[1ex]
		0 &\mbox{ if $P=0$},
	\end{array}
\right.
\end{equation}
introduced in \cite{DieEtt08} and since then widely used in the context of Orlicz growth.
\\
 Most of the above mentioned papers deal with an inhomogeneity $f$  that is not weakly differentiable and this, in general, results in a higher differentiability for the solutions of fractional order.
Indeed,  the  higher differentiability of integer order cannot be expected already for  solutions to the $p$-harmonic type equations, if $p>2$,  unless some differentiability even of fractional order is assumed on the datum $f$, while for $1<p<2$, the situation  changes (see \cite{BS, BeDiening} and \cite{CGPdN, Cavagnoli}). It is worth pointing out that if one wishes to express the higher differentiability of the symmetric gradient through the operator itself, it is sufficient to assume the datum $f\in L^2$  (see for example \cite{CianchiMazya2018}).  
\\
Since we deal with a general  Young function $\phi$, we shall assume $f$ differentiable, $f\in W^{1,\phi^*}$ with $\phi^*$ conjugate of $\phi$ to be precise, even if such assumption can be more restrictive than necessary for some choices of $\phi$.
\\
Related results on the $W^{1,2}_{\rm loc}(\Omega, \mathbb R^{n\times n})$ regularity of $V(\E u)$ for local solutions to  Stokes systems, under  various growth conditions on the differential operator and different integrability assumptions of the right-hand side,  can be  found in the papers \cites{Breit, BrFu, DiKa, Na}. However, all of those contributions  concern operators independent of the space variable.

\noindent To our knowledge, no higher differentiability results of integer order for solutions of  systems as in \eqref{equareg} are available in case of operators depending on the space variable and with Orlicz growth and our aim  here is to fill this gap. Our main result states as follows
\begin{thm}\label{main}
   Let $A:\Omega\times\Sym\to\Sym$ be a mapping that is in $C^1(\Sym\setminus\{0\})\cap C^0(\Sym)$ with respect to the second variable and satisfying
the properties~\eqref{ip1}  \eqref{ip2}, \eqref{ip2reg1} and \eqref{iplip0}. Assume that \eqref{indices3} is in force and that $f\in W^{1,\phi^\star}_{\mathrm{loc}}(\Omega, \mathbb{R}^n)$.  If $u\in W^{1,\phi}(\Omega, \real^n)$ is a weak solution to \eqref{equareg}, then 
  %\begin{itemize}
  {$V(\E u)\in W^{1,2}_{\mathrm{loc}}(\Omega,\Sym)$}
  and the following inequality holds for every pair of concentric balls $B_\rho\subset B_r\Subset \Omega$ 
\begin{eqnarray}\label{firstestimate}
   \int_{B_{\rho}}|D( V(\E u))|^2\,\dx&\le& 
      c  \int_{B_{r}}\phi(|D u|)\,\dx+ c\int_{B_{r}}\phi^*\left(|Df|\right)\,\dx
 \end{eqnarray}
 for a constant $c=c(\nu,L_1,L_2,K,i_\phi, s_\phi, n, \rho, r)$.
\end{thm}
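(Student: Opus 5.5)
The plan is to combine the difference quotient method with an approximation argument, as the abstract suggests. The difficulty is that a priori $u$ is only in $W^{1,\phi}$, and the difference quotient test is not directly admissible unless we already know $\tau_h V(\E u)$ behaves well. Moreover, the right-hand side term $\int f\cdot \tau_{-h}(\eta^2\tau_h u)$ must be handled by moving one difference quotient onto $f$, which requires $\tau_h u$ to be controlled in $W^{1,\phi}$ and not only $\tau_h(\E u)$. I would therefore first prove the estimate for solutions $u_\varepsilon$ of regularized problems on a ball $B_R\Subset\Omega$. These are obtained by adding to the operator a singular higher order perturbation $\varepsilon(-\Delta)^m$-type term, or simply $\varepsilon\,\mathrm{div}(|Du|^{q-2}Du)$ with $q$ large, together with the boundary datum $u_\varepsilon=u*\rho_\varepsilon$. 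We also mollify $A$ and $f$ in $x$, so that $u_\varepsilon$ enjoys enough regularity for the difference quotient computations to be justified.

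\textbf{Step 1 (uniform estimate).} Fix concentric balls $B_\rho\subset B_r$ and a cut-off $\eta$. We test the regularized system with $\tau_{-h}(\eta^2\tau_h u_\varepsilon)$, where $\tau_h$ is the finite difference in direction $e_s$. We split
\[A(x+h,\E u(x+h))-A(x,\E u(x))=\big[A(x+h,\E u(x+h))-A(x+h,\E u(x))\big]+\big[A(x+h,\E u(x))-A(x,\E u(x))\big].\]
By \eqref{ip1} and the standard equivalence $\phi''(|P|+|Q|)|P-Q|^2\sim |V(P)-V(Q)|^2$, the first bracket gives the left-hand side $\int\eta^2|\tau_hV(\E u)|^2$. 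The terms where the derivative hits $\eta$ are bounded via \eqref{ip2} and Young's inequality with $\phi$-shifted functions. The resulting bound is $\sigma\int\eta^2|\tau_hV|^2+c_\sigma\int\phi''(|\E u(x+h)|+|\E u(x)|)|\tau_h u|^2|D\eta|^2$, and the last integral is controlled by $|h|^2\int_{B_r}\phi(|Du|)$. The $x$-dependence term is estimated by \eqref{iplip0} as $K|h|\,\phi'(|\E u|)$ times $|\tau_h \E u|$ or $|\tau_h u||D\eta|$. Using $\phi'(t)\,|\tau_h\E u|\le \sigma\,\phi''(\cdot)|\tau_h\E u|^2+c\,\phi(t)\,|h|^2$-type Young inequalities, this gives $\sigma\int\eta^2|\tau_hV|^2+c|h|^2\int\phi(|Du|)$. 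For the forcing term we write $\int f\cdot\tau_{-h}(\eta^2\tau_h u)=-\int\tau_h f\cdot\eta^2\tau_h u$, and then use Young's inequality with $\phi,\phi^*$:
\[\le \int\phi^*(|\tau_hf|/|h|)\,|h|^0\cdot |h|^{\cdot}+\int\phi(|\tau_h u|/|h|)\]
after normalizing by $|h|^2$. The perturbation term is monotone and only contributes nonnegative terms, plus cut-off errors that are multiplied by $\varepsilon$.

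\textbf{Step 2 (the main obstacle).} Here $\int\phi(|\tau_hu|/|h|)\le\int\phi(|Du|)$ is fine, but dividing by $|h|^2$ is not homogeneous unless we use the $\Delta_2$ structure carefully. The correct scaling is to divide by $|h|^2$ and control $\int\eta^2\tau_h f\cdot\tau_hu/|h|^2\le\int\phi^*(|\tau_hf|/|h|)+\int\phi(|\tau_hu|/|h|)$, which works directly. The genuine difficulty is a Korn-type issue: the cut-off terms involve $Du$ rather than $\E u$, and we must absorb $\phi''$-weighted terms with full gradients. I would handle this with an Orlicz Korn inequality, valid thanks to \eqref{indices3}, applied on balls for $u_\varepsilon$, yielding $\int\phi(|Du_\varepsilon|)$ on the right-hand side. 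Letting $h\to0$ gives $V(\E u_\varepsilon)\in W^{1,2}$ with \eqref{firstestimate} uniform in $\varepsilon$.

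\textbf{Step 3 (passage to the limit).} The energy estimate for the regularized problem, together with the convergence of the boundary data, gives $\int_{B_R}\phi(|Du_\varepsilon|)\to\int_{B_R}\phi(|Du|)$. Monotonicity via \eqref{ip1} then gives $V(\E u_\varepsilon)\to V(\E u)$ strongly in $L^2$, and uniqueness of the solution of the Dirichlet problem for the limit system identifies the limit with $u$. By weak lower semicontinuity of $\int|D V|^2$ we conclude $V(\E u)\in W^{1,2}_{\loc}$ together with \eqref{firstestimate}.
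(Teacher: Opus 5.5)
Your proposal has a genuine gap, in the cut-off term of Step 1, and the Korn inequality you invoke in Step 2 cannot close it.

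\textbf{Where it fails.} You bound
\[
\int\eta|D\eta|\,\big|A(x+h,\E u(x+h))-A(x+h,\E u(x))\big|\,|\tau_hu|\,\dx
\]
by $\sigma\int\eta^2|\tau_hV(\E u)|^2+c_\sigma\int\phi''(|\E u(x)|+|\E u(x+h)|)\,|\tau_hu|^2|D\eta|^2$. You then claim the last integral is at most $c|h|^2\int_{B_r}\phi(|Du|)$.
\begin{itemize}
\item This is true when $i_\phi\ge2$, because then $\phi''(a)t^2\lesssim\phi(a)+\phi(t)$.
\item It is false in the subquadratic range that \eqref{indices3} allows. Take $\phi(t)=t^p/p$ with $1<p<2$. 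The weight depends on $|\E u|$ and blows up where $\E u$ is small, while $\tau_hu$ is governed by the full gradient.
\item Concretely, $u(x)=Wx$ with $W\neq0$ skew-symmetric is a weak solution, since \eqref{ip2reg1} forces $A(x,0)=0$. For it, $\E u\equiv0$ but $\tau_hu=Wh\neq0$, so the integral is $+\infty$. Perturbing this example shows the ratio can be arbitrarily large even where $\E u\neq0$.
\item Korn's inequality cannot repair this. It only compares the modulars $\int\phi(|Dw|)$ and $\int\phi(|\E w|)$ for $w\in W^{1,\phi}_0$, and says nothing about a quadratic quantity weighted by a function of $|\E u|$.
\item Shifted Young functions do not save the difference-quotient route either. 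From $\phi'_{|\E u|}(|\tau_h\E u|)\,|\tau_hu|\le\sigma\phi_{|\E u|}(|\tau_h\E u|)+c_\sigma\phi_{|\E u|}(|\tau_hu|)$, in the regime $|\tau_hu|\gg|\E u|$ you only get $\phi_{|\E u|}(|\tau_hu|)\eqsim\phi(|\tau_hu|)\lesssim|h|^{i_\phi}\phi(|\tau_hu|/|h|)$.
\item After dividing by $|h|^2$ this leaves a factor $|h|^{i_\phi-2}$, which blows up. That yields fractional differentiability only, not $V(\E u)\in W^{1,2}_{\loc}$.
\end{itemize}

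\textbf{The missing idea.} The paper never lets $D_ju$ meet the derivative of $A$.
\begin{itemize}
\item It uses the higher-order perturbation $\tilde\varepsilon\int\langle D^ku,D^k\psi\rangle$, with $k$ so large that $W^{k,2}\hookrightarrow C^2$. The approximate solutions $u_{k,\varepsilon}$ are therefore genuinely $C^2$ and lie in $W^{k+1,2}_{\loc}$.
\item It differentiates the system, tests with $\eta^{2k}D_ju_{k,\varepsilon}$, and integrates the cut-off term $\mathrm{II}$ by parts. Then $A(x,\E u_{k,\varepsilon})$ itself multiplies $D_j^2u_{k,\varepsilon}$.
\item The remaining lower-order pieces are of the form $\phi'(|\E u|)|Du|\,|D\eta|^2$ (or with $|D^2\eta|$). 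These are harmless by \eqref{eq:young2}.
\item The pointwise identity $\partial_{jj}u^i=2\partial_j\E_{ij}u-\partial_i\E_{jj}u$ gives $|D_j^2u|\le c|D\E u|$. Young's inequality then gives $\phi'(|\E u|)|D\E u|\le\sigma\phi''(|\E u|)|D\E u|^2+c_\sigma\phi(|\E u|)$.
\item The arguments of the weights now match, so this works for every $i_\phi>1$.
\item This step needs genuine second derivatives of the approximate solutions. That is why the paper uses the singular higher-order perturbation rather than a regularization such as $\varepsilon\,\dive(|Du|^{q-2}Du)$, which does not produce $C^2$ solutions.
\end{itemize}

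A smaller point in Step 3: an energy bound alone does not give $\int\phi(|Du_\varepsilon|)\to\int\phi(|Du|)$. You need a monotonicity comparison with $u$, as the paper does, or Minty's trick combined with uniqueness.
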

We refer to \cite{BeDiening} for an analogous result for operator $\phi$-Laplace systems independent of the $x$-variable.

We rely on the following notion of weak solutions to systems of the type \eqref{equareg}.

\begin{defn}\label{def:weak-solution}
We call $u\in W^{1,\phi}(\Omega,\real^n)$ a weak solution of
the system \eqref{equareg} if 
\begin{equation}\label{weak-equa}
  \int_{\Omega}\langle A(x,\E
  u),\E\eta\rangle\,\dx
  =
  \int_{\Omega}f\cdot\eta\,\dx
\end{equation}
is satisfied for every 
%$\eta\in C_0^\infty(\Omega,\real^{n})$.
$\eta\in W^{1,\phi}_0(\Omega,\real^{n})$.
\end{defn}

  Higher differentiability results for both the full and the symmetric gradient  are usually obtained  by means of the well known difference quotient methods (  \cite{AF,  GiaquintaModica:1986, Giusti}). Among the numerous  more recent contributions obtained under different growth conditions on the differential operator and on the integrability of the right hand side, we mention \cite{Ambro, AGPdN, Cellina, CKP1, GM, GriRus, MP1, MP2}.
  \\
Our proof is based on the construction of a suitable family of  approximating problems, whose solutions are smooth enough so that their second derivatives can be used as test function. 
 A central idea of our approach
is approximating the original problem adding  higher order perturbations. 
\\
It is worth noting that, in this respect,  our reasoning have features in common with \cite{CKP} and with \cite{DiKa}. 
Next, we shall establish uniform higher differentiability estimates
for the symmetric gradient of the solutions of the auxiliary problems, mainly through the use of an identity that relates the derivatives of the symmetric gradient of a $C^2$ function with its second derivatives. It seems that, avoiding the use of the difference quotient method has the advantage of making the
ensuing calculations easier, since we deal with smooth solutions.
The conclusion will follow by  proving that the a priori higher differentiability estimates are preserved in passing to the limit.
\\
The paper is organized as follows.  In  Section 2 we shall recall the notion and some properties of Young functions that will be needed in our proofs. In particular, we shall state a Korn-type inequality, implying the equivalence of the norm of $\E u$ and of $Du$ and a modular Poincar\'e inequality in the context of Orlicz spaces.  The approximation argument will be introduced in  Section 3, which is devoted to  the proof of the uniform a priori estimate. Finally, the proof of the main result will be the content of Section 4.

 \section{Preliminary results}
\subsection{Notation and elementary lemmas}

\noindent
We write $B_\rho(x_o)\subset\R^n$ for the open ball of radius $\rho>0$ and center
$x_o\in\R^n$.
The mean value of a function $f\in L^1(B_\rho(x_o),\R^N)$ will be
denoted by 
\begin{equation*}
  (f)_{x_o,\rho}:=\mint_{B_\rho(x_o)}f(x)\,\dx.
\end{equation*}
If the center of the ball is clear from the context, we shall
 write $(f)_\rho$ instead of $(f)_{x_o,\rho}$.
For the standard scalar product on the space
$\R^{n\times n}$ of $n\times n$ matrices, we shall use the notation
$\langle\cdot,\cdot\rangle$, while the
Euclidean scalar product on $\R^n$ will be denoted by "$\cdot$".

We will write $c$ for a general constant that may vary on different occasions, even within the
same line of estimates. Relevant dependencies on parameters and special constants will be suitably emphasized using
parentheses or subscripts.
For functions $f$ and $g$ we write $f\eqsim g$ if there are positive
constants $c_1,c_2$ with $c_1 f\le g\le c_2f$. 
\bigskip

\subsection{Interlude on Young Functions}
\label{sec:interl-orlicz-funct}

Here we introduce a
few auxiliary functions and some basic properties of {Young} functions, that will be used in what follows. 

A function {$\phi:[0,\infty)\to [0,\infty)$ is called a
Young function if and only if it is a left-continuous convex function  that vanishes at $0$ and is not constant. 
It is said to
satisfy the $\Delta_2$-condition if and only if there is a constant
$c>1$ such that $\phi(2t)\le c\,\phi(t)$ for every $t>0$. The smallest
constant $c$ with this property can be denoted by $\Delta_2(\phi)$.

The conjugate of a {Young} function~$\phi$ is defined as
\begin{align*}
  \phi^*(t) := \sup_{s \geq 0} \{ st - \phi(s) \}, \quad t \geq 0.
\end{align*}
One has that also $ \phi^*$ is  a Young function and it holds
\begin{equation}
  \label{Young-psi}
  st\le \phi(s)+ \phi^*(t)
  \qquad\mbox{for $s,t\ge0$}.
\end{equation}
For $i_\phi$ and $s_\phi$  defined in \eqref{indices},  one can verify that
\begin{align}
    \label{indices1}
    i_\phi\leq    \frac{t\phi'(t)}{\phi(t)}  \leq  s_\phi \quad \text{for $t >0$}
\end{align}
and that 
\begin{align}
    \label{indices2}
    s_\phi '\leq    \frac{t{ \phi^*}'(t)}{\phi^*(t)}  \leq  i_\phi ' \quad \text{for $t >0$,}
\end{align}
where  $i_\phi'$ and $s_\phi'$ denote  the H\"older conjugates of $i_\phi$
and $s_\phi$.
\\ From \eqref{indices1} and  \eqref{indices2}, one can deduce respectively that
 \begin{align}
   \min\{ \lambda^{i_\phi}, \lambda^{s_\phi}\} \phi(t)
    &\le \phi(\lambda t)
      \le\max\{ \lambda^{i_\phi}, \lambda^{s_\phi}\}\phi(t) \quad \text{for $t, \lambda \geq 0$}
      \label{homogeneity-phi}
      \end{align}
      \begin{align}
      \min\{ \lambda^{i_\phi'}, \lambda^{s_\phi'}\}\phi^*(t)
    &\le  \phi^*(\lambda t)
      \le \max\{ \lambda^{i_\phi'}, \lambda^{s_\phi'}\} \phi^*(t) \quad \text{for $t, \lambda \geq 0$.}
      \label{homogeneity-phi-star}
  \end{align}
  \\ Moreover, the following useful inequality holds
\begin{align}
    \label{further}
    \phi^* (\phi'(t))
  \le
   \phi(2t) \qquad
  \text{for $t>0$.}
\end{align}
We shall take advantage from the use of the  shifted  Young function. Such a notion was first given in~\cite{DieEtt08}, here we appeal to the slight variant
of~\cite[Appendix~B]{DieForTomWan19}.
\color{black}
We define the shifted Young functions $\phi_a(t)$ for $a \geq 0$ 
and $t\in \R$ by
\begin{align}
  \phi_a(t) := \int_0^t \frac{\phi'(a \vee s)}{a \vee s} s \, ds,
\end{align}
where $s_1 \vee s_2 := \max \{ s_1, s_2 \}$.
 The index~$a$ is called
the shift.  Obviously, $\phi_0= \phi$. Moreover, if
$a \eqsim b$, then $\phi_a(t) \eqsim \phi_b(t)$. Note that in the case $\phi(t):=\frac1p t^p$, the shifted Young functions satisfy
\begin{align*}
  \label{eq:phi_a-approx}
  \begin{aligned}
    \phi_a(t) &\eqsim (a \vee t)^{p-2} t^2,
    \\
    \phi_a'(t) &\eqsim (a \vee t)^{p-2} t,
  \end{aligned}
\end{align*}
with constants only depending on $i_\phi$.

 \noindent Observe that  for $\phi \in C^2(0, \infty)$  one has that  $\phi_a\in C^2((0, \infty) \setminus \{a\})$ and  the following identity holds
\begin{equation*}
  \frac{t\phi_a''(t)}{\phi_a'(t)}
  =
  \left\{
  \begin{array}{cl}
    1,&\mbox{ if $0\le t<a$,}\\[0.8ex]
    \frac{t\phi''(t)}{\phi'(t)},&\mbox{ if $t>a$.}
  \end{array}
  \right.
\end{equation*}
Consequently, \eqref{indices} implies that
\begin{equation}
  \label{phi-assumption-shifted}
  \min\{1,i_{\phi}-1\}\le \frac{t\phi_a''(t)}{\phi_a'(t)}\le \max\{1,s_{\phi}-1\}.
\end{equation}
Moreover, for $\phi$ satisfying \eqref{indices3}, it holds 
\begin{align}
    \min\{\lambda^{i_\phi},\lambda^2\} \phi_a(t)
    &\le \phi_a(\lambda t)
      \le \max\{\lambda^{s_\phi},\lambda^2\}\phi_a(t) \quad \text{for $\lambda, t \geq 0$}
      \label{homogeneity-phi-shifted}.
      \end{align}
       Hence, by \eqref{indices2}, we get
      \begin{align} 
   \min\{\lambda^{s_\phi'},\lambda^2\} \phi_a^*(t)
    &\le \phi_a^*(\lambda t)
      \le\max\{\lambda^{i_\phi'},\lambda^2\}\phi_a^*(t) \quad \text{for $\lambda, t \geq 0$.}
      \label{homogeneity-phi-star-shifted}
  \end{align}
 and, recalling the convexity of $\phi$, we obtain 
\begin{equation}
  \label{sub-additivity-phi}
  \phi_a(s+t)
  \le
  \max\{2^{s_\phi},4\}\phi_a\Big(\frac{s+t}2\Big)
   \le
  \max\{2^{s_\phi-1},2\}\big(\phi_a(s)+\phi_a(t)\big)
\end{equation} 
      Equations \eqref{Young-psi},
   \eqref{homogeneity-phi-shifted},
and~\eqref{homogeneity-phi-star-shifted} ensure  that, for
every $\delta>0$ there exists~$c_\delta=c_\delta(\delta,i_\phi,s_\phi)\geq 1$ such
that 
\begin{align}
  \label{eq:young}
  \begin{aligned}
    s\,t &\leq \delta\,\phi_{a}(t) + c_\delta\, \phi_a^*(s) \qquad \text{for $s,t,a\geq 0$.}
  \end{aligned}
\end{align}

{Since $\phi_a'$ is nondecreasing and $\phi_a$ satisfies the
$\Delta_2$-condition, we get} 
\begin{equation}
  \label{bound-phi-prime}
  s\phi_a'(s)
  \le
  \phi_a(2s)-\phi_a(s)
  \le
  c(q) \phi_a(s)
\end{equation}
while, a direct computation and estimate \eqref{bound-phi-prime}  imply
\begin{equation}\label{bound-phi-star}
  \phi_a^\ast(\phi_a'(s))
  =
  s\phi_a'(s)-\phi_a(s)
  \le
  c(q) \phi_a(s),
\end{equation}
{for any $s\ge0$.}

{Finally, the Young type inequalities \eqref{eq:young}  with $s$
replaced by $\phi_a'(s)$ together with \eqref{bound-phi-star} yield the estimates}
\begin{align}
  \label{eq:young2}
  \begin{aligned}
    \phi_a'(s)\,t &\leq \delta\,\phi_a(t) + c_\delta\, \phi_a(s),
    \\
    \phi_a'(s)\,t &\leq c_\delta\,\phi_{a}(t)+\delta\, \phi_a(s),
  \end{aligned}
\end{align}
{for $s,t,a \geq 0$, with constants $c_\delta=c_\delta(\delta,p,q)$.}

\medskip
Next Lemma yields  equivalent representations of the shifted
Young functions we shall apply in the sequel.
\begin{lem}[{\cite[Lemma 40]{DieForTomWan19}}]\label{lem:shifted-N-functions}
  Let $\phi:[0,\infty)\to[0,\infty)$ be a function as
  in~\eqref{indices1}. Then, for every $P,Q\in\R^{n\times
    n}$, we have
  \begin{equation*}
    \begin{aligned}
      \phi'_{|Q|}(|P-Q|)
      &\eqsim
      \frac{\phi'(|P|\vee|Q|)}{|P|\vee|Q|}|P-Q|,\\[0.8ex]
      \phi_{|Q|}(|P-Q|)
      &\eqsim \frac{\phi'(|P|\vee|Q|)}{|P|\vee|Q|}|P-Q|^2,
    \end{aligned}
  \end{equation*}
  where the implicit constants depend only on {$i_{\phi}$ and $s_{\phi}$}.
\end{lem}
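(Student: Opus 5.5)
The plan is to reduce everything to the definition of the shifted function and to the almost-homogeneity of $\phi'$ coming from \eqref{indices3}. Throughout, set $a:=|Q|$ and $t:=|P-Q|$. By the definition of $\phi_a$,
\begin{equation*}
\phi_a'(t)=\frac{\phi'(a\vee t)}{a\vee t}\,t .
\end{equation*}
So the first equivalence amounts to showing that $s\mapsto \phi'(s)/s$ takes comparable values at $a\vee t$ and at $|P|\vee|Q|$.

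\textbf{Step 1: comparability of the arguments.} I first show that $a\vee t$ and $|P|\vee|Q|$ are comparable with constant $2$.
\begin{itemize}
\item On one side, $|P|\le |Q|+|P-Q|\le 2(a\vee t)$ and $|Q|=a$, so $|P|\vee|Q|\le 2(a\vee t)$.
\item On the other side, $a=|Q|\le |P|\vee|Q|$ and $t\le |P|+|Q|\le 2(|P|\vee|Q|)$, so $a\vee t\le 2(|P|\vee|Q|)$.
\end{itemize}
The degenerate case $P=Q=0$ is trivial, since both sides vanish.

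\textbf{Step 2: $g(s):=\phi'(s)/s$ is stable under dilations by factors in $[\tfrac12,2]$.} Integrating the bounds $i_\phi-1\le \tau\phi''(\tau)/\phi'(\tau)\le s_\phi-1$ from \eqref{indices} over $[s,\lambda s]$ (or $[\lambda s,s]$) gives
\begin{equation*}
\min\{\lambda^{i_\phi-1},\lambda^{s_\phi-1}\}\,\phi'(s)\le \phi'(\lambda s)\le \max\{\lambda^{i_\phi-1},\lambda^{s_\phi-1}\}\,\phi'(s).
\end{equation*}
Dividing by $\lambda s$, we get that $g(\lambda s)\eqsim g(s)$ for $\lambda\in[\frac12,2]$, with constants depending only on $i_\phi$ and $s_\phi$. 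Combined with Step 1, this yields
\begin{equation*}
\frac{\phi'(a\vee t)}{a\vee t}\eqsim \frac{\phi'(|P|\vee|Q|)}{|P|\vee|Q|},
\end{equation*}
which is the first claimed equivalence.

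\textbf{Step 3: the second equivalence.} It suffices to prove $\phi_a(t)\eqsim t\,\phi_a'(t)$ and then insert the first equivalence. Since $\phi_a'$ is nondecreasing (by \eqref{phi-assumption-shifted} its logarithmic derivative is positive), we have
\begin{equation*}
\phi_a(t)=\int_0^t\phi_a'(\tau)\,d\tau\le t\,\phi_a'(t).
\end{equation*}
The reverse inequality $t\,\phi_a'(t)\le c\,\phi_a(t)$ is \eqref{bound-phi-prime}, whose constant depends only on $s_\phi$ through the $\Delta_2$ bound \eqref{homogeneity-phi-shifted}.

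\textbf{Main obstacle.} There is no real difficulty here. The only point needing care is Step 2, namely making sure the dilation constants depend only on $i_\phi$ and $s_\phi$, and not on the shift $a$. This is guaranteed because the estimate is carried out on the unshifted $\phi'$.
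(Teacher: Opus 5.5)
Your proof is correct. The paper gives no argument of its own for this lemma; it imports it from \cite[Lemma 40]{DieForTomWan19}. Your route is the standard direct verification behind that citation: the explicit formula $\phi_a'(t)=\frac{\phi'(a\vee t)}{a\vee t}\,t$, the comparability $\tfrac12(|P|\vee|Q|)\le |Q|\vee|P-Q|\le 2(|P|\vee|Q|)$, the stability of $\phi'(s)/s$ under dilations by factors in $[\tfrac12,2]$, and $\phi_a(t)\eqsim t\,\phi_a'(t)$.

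One small remark: your Step 2 uses the pointwise bounds on $t\phi''/\phi'$ from \eqref{indices}, which are in force in this paper. If one reads the hypothesis \eqref{indices1} literally, the same doubling of $\phi'$ still follows from convexity, $\phi'(s)\le\phi'(2s)\le\phi(4s)/(2s)\le c\,\phi(s)/s\le c\,\phi'(s)$, with $c$ depending only on $s_\phi$.
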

Also we shall make strong use of the possibility to change the shift as the following Lemma suggests.
\begin{lem}[{ \cite[Corollary~44]{DieForTomWan19}}]
  \label{lem:change_of_shift}
  For~$\delta>0$ there exists~$c_\delta=c_\delta(\delta,i_\phi,s_\phi)$ such that for
  all~$P,Q \in \R^{n\times n}$ there holds
  \begin{align*}
    \phi_{|P|}(t) & \leq c_{\delta} \phi_{|Q|}(t) + \delta\,
                    |{V(P)-V(Q)}|^2,
    \\
    (\phi_{|P|})^*(t) & \leq c_{\delta} (\phi_{|Q|})^*(t) + \delta\,
                        |{V(P)-V(Q)}|^2.
    \\
  \end{align*}
\end{lem}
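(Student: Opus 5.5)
The plan is to argue directly from the pointwise description of the shifted functions rather than through any abstract machinery. By Lemma~\ref{lem:shifted-N-functions} and the definition of $\phi_a$ we have, for every $a,t\ge0$,
\[
\phi_a(t)\eqsim \frac{\phi'(a\vee t)}{a\vee t}\,t^2 ,
\]
and, combining Lemma~\ref{lem:shifted-N-functions} with \eqref{eq:V-def}, the standard equivalence
\[
|V(P)-V(Q)|^2\eqsim \phi_{|Q|}(|P-Q|)\eqsim \frac{\phi'(|P|\vee|Q|)}{|P|\vee|Q|}|P-Q|^2 .
\]
Here all implicit constants depend only on $i_\phi,s_\phi$. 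I would establish this second equivalence first, by a short computation on $V(P)-V(Q)$ comparing $\sqrt{\phi'(|P|)/|P|}$ with $\sqrt{\phi'(|Q|)/|Q|}$ via \eqref{indices}. From \eqref{homogeneity-phi} one also gets, for $\kappa s\le \sigma\le s$,
\[
\frac{\phi'(\sigma)}{\sigma}\le c\,\kappa^{-|2-i_\phi|-|2-s_\phi|}\frac{\phi'(s)}{s},
\]
so that $\phi'(s)/s$ is comparable at comparable arguments, even though it need not be monotone.

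The first inequality is then proved by cases on the relative sizes of $|P|$, $|Q|$ and $t$, with a parameter $\kappa\in(0,1/2)$ to be fixed in terms of $\delta$.
\begin{enumerate}
\item If $\tfrac12|Q|\le|P|\le2|Q|$, the shifts are comparable and $\phi_{|P|}(t)\eqsim\phi_{|Q|}(t)$ directly, so no $V$-term is needed.
\item If $|P|>2|Q|$, then $|P-Q|\eqsim|P|$, hence $|V(P)-V(Q)|^2\eqsim\phi'(|P|)|P|\eqsim\phi(|P|)$.
\begin{itemize}
\item For $t\ge\kappa|P|$, both $|P|\vee t$ and $|Q|\vee t$ lie within a factor depending on $\kappa$ of $t$. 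The comparability remark gives $\phi_{|P|}(t)\le c_\kappa\phi_{|Q|}(t)$.
\item For $t<\kappa|P|$ we have $\phi_{|P|}(t)\eqsim \frac{\phi'(|P|)}{|P|}t^2\le c\kappa^2\phi(|P|)\le c\kappa^2|V(P)-V(Q)|^2$.
\end{itemize}
\item If $|Q|>2|P|$, then symmetrically $|V(P)-V(Q)|^2\eqsim\phi(|Q|)$.
\begin{itemize}
\item For $t\ge\kappa|Q|$ we again compare $\phi'(|P|\vee t)/(|P|\vee t)$ with $\phi'(|Q|\vee t)/(|Q|\vee t)$, since the arguments are within a factor $\kappa^{-1}$. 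This yields $\phi_{|P|}(t)\le c_\kappa\phi_{|Q|}(t)$.
\item For $t<\kappa|Q|$, set $s=|P|\vee t<\kappa'|Q|$ with $\kappa'=\max\{\kappa,1/2\}$. This bound alone is not small; this is the only delicate point. I would refine the split and compare with $s\le\kappa|Q|$ instead of $t\le\kappa|Q|$. If $s\ge\kappa|Q|$, we are in the comparable regime as above. If $s<\kappa|Q|$, we use $\phi_{|P|}(t)\le c\,\phi(s)\le c\kappa^{i_\phi}\phi(|Q|)\le c\kappa^{i_\phi}|V(P)-V(Q)|^2$.
\end{itemize}
\end{enumerate}
Choosing $\kappa$ so that $c\kappa^{2}+c\kappa^{i_\phi}\le\delta$ and setting $c_\delta:=c_\kappa$ gives the first inequality. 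The main obstacle is precisely this bookkeeping: avoiding any (false) monotonicity of $\phi'(s)/s$, and always splitting according to whether the effective argument $|P|\vee t$ or $|Q|\vee t$ is comparable to the larger shift.

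For the conjugate inequality I would not repeat the case analysis. I would use the known identity $(\phi_a)^*(t)\eqsim(\phi^*)_{\phi'(a)}(t)$ (again from \cite{DieForTomWan19}), which follows from $\phi_a'(s)\eqsim \frac{\phi'(a\vee s)}{a\vee s}s$ by inverting $\phi_a'$. Note that $\phi^*$ satisfies \eqref{indices3} with indices $s_\phi',i_\phi'$ by \eqref{indices2}. We then apply the first inequality to $\phi^*$ with the matrices $A(P)=\phi'(|P|)P/|P|$ and $A(Q)=\phi'(|Q|)Q/|Q|$, whose moduli are $\phi'(|P|)$ and $\phi'(|Q|)$. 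It remains to check that
\[
|V^*(A(P))-V^*(A(Q))|^2\eqsim|V(P)-V(Q)|^2 ,
\]
which follows from the equivalence recorded at the start applied to both $\phi$ and $\phi^*$, together with $(\phi^*)'(\phi'(s))=s$.
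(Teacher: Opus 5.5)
Your proof is correct. The paper gives no proof of this lemma; it only cites Corollary~44 of Diening--Fornasier--Tomasi--Wank. The standard argument behind that result is organized differently, so here is a comparison.

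\textbf{The standard route.} It first proves the $V$-free scalar statement $\phi_a(t)\le c_\delta\phi_b(t)+\delta\,\phi_a(|a-b|)$ for all $a,b,t\ge0$, using a single dichotomy:
\begin{itemize}
\item if $t\ge\kappa|a-b|$, then $a\vee t$ and $b\vee t$ lie within a factor $1+\kappa^{-1}$ of each other, so $\phi_a(t)\le c_\kappa\phi_b(t)$;
\item if $t<\kappa|a-b|$, then \eqref{homogeneity-phi-shifted} gives $\phi_a(t)\le\phi_a(\kappa|a-b|)\le\kappa^{\min\{2,i_\phi\}}\phi_a(|a-b|)$.
\end{itemize}
The matrix version follows with $a=|P|$, $b=|Q|$, since $||P|-|Q||\le|P-Q|$ and $\phi_{|P|}(|P-Q|)\eqsim|V(P)-V(Q)|^2$.

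\textbf{How your route relates.} Your case analysis is essentially this dichotomy in disguise. In your cases 2 and 3 one has $||P|-|Q||\eqsim|P|\vee|Q|$, and in case 1 the shifts are comparable. The difference is in the small regime: you compare with $\phi(|P|\vee|Q|)$ by hand instead of using the lower index $\min\{2,i_\phi\}$ of the shifted function. That is what forces your extra sub-split in case 3. With the shifted homogeneity, $t<\kappa|Q|$ gives at once $\phi_{|P|}(t)\le\phi_{|P|}(\kappa|Q|)\le\kappa^{\min\{2,i_\phi\}}\phi_{|P|}(|Q|)\eqsim\kappa^{\min\{2,i_\phi\}}\phi(|Q|)$. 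The scalar route is shorter and yields a reusable lemma with explicit $\delta$-dependence. Yours is more elementary, relying only on $\phi'(s)/s$ and the homogeneity of $\phi$ itself.

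\textbf{The conjugate inequality.} Your reduction is the standard one, and both facts it uses hold exactly, not just up to constants:
\begin{itemize}
\item With the shift used here, $((\phi^*)_{\phi'(a)})'$ is the inverse function of $\phi_a'$, so $(\phi_a)^*=(\phi^*)_{\phi'(a)}$.
\item The relation $(\phi^*)'(\phi'(s))=s$ gives $V^*\big(\phi'(|P|)P/|P|\big)=V(P)$ identically, so the two $V$-terms coincide.
\end{itemize}
You therefore do not need the $V$-equivalence for $\phi^*$. Taken alone, that equivalence would still leave you needing the two-sided bound $\big|\phi'(|P|)\frac{P}{|P|}-\phi'(|Q|)\frac{Q}{|Q|}\big|\eqsim\frac{\phi'(m)}{m}|P-Q|$ with $m=|P|\vee|Q|$, which you never recorded.

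\textbf{Minor correction.} The fact that $\phi^*$ satisfies \eqref{indices3} with indices $s_\phi'$, $i_\phi'$ does not come from \eqref{indices2}, which only controls $t(\phi^*)'/\phi^*$. It follows from $(\phi^*)''(\phi'(s))=1/\phi''(s)$: at $t=\phi'(s)$ one gets $t(\phi^*)''(t)/(\phi^*)'(t)=\phi'(s)/(s\phi''(s))\in[1/(s_\phi-1),1/(i_\phi-1)]$.
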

In particular, the choice $P=0$ allows to add a shift in the form
\begin{equation}
  \label{add-shift}
  \phi(t)
  \le
  c_\delta\phi_{|Q|}(t)+\delta|V(Q)|^2
  \le
  c_\delta\phi_{|Q|}(t)+\delta\phi(|Q|)
\end{equation}
for every $Q\in\R^{n\times n}$, where the
last estimate follows from the equivalence~\eqref{seconda} below.

\bigskip
For further needs, we note that the assumption  $P\to A(x,P)\in C^1(\Sym\setminus\{0\})\cap C^0(\Sym)$, together with \eqref{ip1}  and \eqref{iplip0}, implies that
\begin{align}
   \label{ip1reg}
  \big\langle D_PA(x,P)Q,Q\big\rangle
  &\ge c(\nu)\phi''(|P|)|Q|^2\\
  %\label{ip2reg}
 %|D_PA(x,P)|&\le c(L_1)\phi''(|P|),\\
  \label{iplip}
  |D_xA(x,P)|
  &\le
  K\phi'(|P|),
 \end{align}

\noindent The next Lemma summarizes the   relation between the operator $A$, the auxiliary function $V$
introduced in~\eqref{eq:V-def} and the shifted versions of~$\phi$.
{The proof can be found, for instance, in \cite[Lemma 3]{DieEtt08} or
  \cite[Lemma 41]{DieForTomWan19}.}

\begin{lem}
  \label{lem:hammer}
 Let $A:\Omega\times\Sym\to\Sym$ be a Carath\'eodory function with
  properties~\eqref{ip1reg} and \eqref{ip2reg1},
  where~\eqref{indices3} is in force.
  Then, for all~$P,Q \in \Sym$ and a.e. $x,y\in\Omega$ there holds
  \begin{align}\label{a-coercivity}
     \langle D_{P}A(x,P)Q,Q\rangle
    &\ge  |D_PV(P)|^2 {\eqsim \phi''(|P|)|Q|^2},\\[0.8ex]
    \label{seconda}
    \langle A(x,Q), Q \rangle
    &\eqsim |{V(Q)}|^2
    \eqsim
    \phi_{|{Q}|}(|{Q}|) \eqsim \phi(|{Q}|)
  \end{align}
  with constants that depend only on ${i_{\phi}, s_{\phi}},\nu$, and $L$.
  {If in addition, the assumption~\eqref{ip2reg1} is satisfied, then we have}
  \begin{align}
    \label{a-Lipschitz}
    |{A(x,P)-A(x,Q)}| &\le c \,\phi_{|{Q}|}'(|{P-Q}|),\\
    \big\langle A(x,P)-A(x,Q), P-Q \big\rangle
    &\eqsim \phi_{|{Q}|} \left( |{P-Q}| \right)
    \eqsim |{V(P)-V(Q)}|^2.\label{a-coercivity-2}
  \end{align}
\end{lem}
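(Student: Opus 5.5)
The plan is to reduce every claim to the one-variable structure of $\phi$ through the indices \eqref{indices}, \eqref{indices1}, and then to compare with the shifted functions by means of Lemma~\ref{lem:shifted-N-functions}. The basic elementary fact I will use repeatedly is that, by \eqref{indices3}, $\phi''(t)\eqsim \phi'(t)/t$ for $t>0$, with constants $i_\phi-1$ and $s_\phi-1$. Combined with the monotonicity of $\phi'$ and the $\Delta_2$ property \eqref{homogeneity-phi}, this gives
\[
\phi''(|P|+|Q|)\eqsim \frac{\phi'(|P|\vee|Q|)}{|P|\vee|Q|},
\]
since $|P|\vee|Q|\le |P|+|Q|\le 2(|P|\vee|Q|)$.

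\textbf{Proof of \eqref{a-coercivity}.} For $P\neq0$ I compute $D_PV(P)Q$ explicitly. Writing $V(P)=g(|P|)P$ with $g(t)=\sqrt{\phi'(t)/t}$, one has
\[
D_PV(P)Q=g(|P|)Q+g'(|P|)\frac{\langle P,Q\rangle}{|P|}P .
\]
Decomposing $Q$ into its components parallel and orthogonal to $P$, both resulting coefficients, $g(|P|)^2=\phi'(|P|)/|P|$ and $(g(|P|)+|P|g'(|P|))^2=\tfrac14\big(\phi''(|P|)+\phi'(|P|)/|P|\big)^2\big/\big(\phi'(|P|)/|P|\big)$, are comparable to $\phi''(|P|)$ by the index bounds. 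Hence $|D_PV(P)Q|^2\eqsim\phi''(|P|)|Q|^2$. The lower bound by $\langle D_PA(x,P)Q,Q\rangle$ then follows from \eqref{ip1reg}, absorbing constants; the inequality is read up to a constant.

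\textbf{Proof of \eqref{seconda}.} Taking $Q=0$ in \eqref{ip1}, and using $A(x,0)=0$ (which follows from \eqref{ip2reg1}), gives $\langle A(x,Q),Q\rangle\ge\nu\phi''(|Q|)|Q|^2\eqsim |Q|\phi'(|Q|)$. By \eqref{ip2reg1} the same quantity is bounded above by $L_2|Q|\phi'(|Q|)$. Next, $|V(Q)|^2=|Q|\phi'(|Q|)\eqsim\phi(|Q|)$ by \eqref{indices1}. Finally, $\phi_{|Q|}(|Q|)\eqsim \phi(|Q|)$ follows either from Lemma~\ref{lem:shifted-N-functions} with $P=0$ or directly from the definition of $\phi_a$.

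\textbf{Proof of \eqref{a-Lipschitz} and \eqref{a-coercivity-2}.} From \eqref{ip2} and the displayed equivalence,
\[
|A(x,P)-A(x,Q)|\le c\,\frac{\phi'(|P|\vee|Q|)}{|P|\vee|Q|}|P-Q|\eqsim \phi'_{|Q|}(|P-Q|)
\]
by the first line of Lemma~\ref{lem:shifted-N-functions}. In the same way, \eqref{ip1} and \eqref{ip2} with Cauchy--Schwarz give
\[
\langle A(x,P)-A(x,Q),P-Q\rangle\eqsim\phi''(|P|+|Q|)|P-Q|^2\eqsim\phi_{|Q|}(|P-Q|)
\]
by the second line of Lemma~\ref{lem:shifted-N-functions}. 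It remains to show $|V(P)-V(Q)|^2\eqsim \frac{\phi'(|P|\vee|Q|)}{|P|\vee|Q|}|P-Q|^2$, and I expect this to be the main obstacle.

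For this estimate I write
\[
V(P)-V(Q)=\int_0^1 D_PV(Q+s(P-Q))(P-Q)\,ds .
\]
By the computation above, this gives the upper bound
\[
|V(P)-V(Q)|^2\lesssim\Big(\int_0^1\phi''(|Q+s(P-Q)|)\,ds\Big)^2\,|P-Q|^2 ,
\]
after using $\big(\int_0^1\sqrt{\phi''(\cdot)}\,ds\big)^2\le\int_0^1\phi''(\cdot)\,ds$, the Jensen/Cauchy--Schwarz inequality. The key one-dimensional estimate is then
\[
\int_0^1\phi''(|Q+s(P-Q)|)\,ds\eqsim\phi''(|P|+|Q|),
\]
which is the standard Acerbi--Fusco type lemma. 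Because $\phi''(t)\eqsim t^{-1}\phi'(t)$ may be singular at $0$ (this happens when $i_\phi<2$), I will prove it using that $t\mapsto\phi'(t)/t^{i_\phi-1}$ is nondecreasing and $\phi'(t)/t^{s_\phi-1}$ is nonincreasing; the segment spends at least a fixed fraction of its length where $|Q+s(P-Q)|\gtrsim|P|\vee|Q|$.

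The lower bound for $|V(P)-V(Q)|^2$ is obtained through monotonicity rather than through this integral. Since $V$ is itself of the form $\tilde A(P)=\sqrt{\phi'(|P|)/|P|}\,P$, which is a gradient-type monotone field, I can apply the same integral representation to $\langle V(P)-V(Q),P-Q\rangle$, using the positivity of $D_PV$ derived in the proof of \eqref{a-coercivity}. I expect this to yield
\[
\langle V(P)-V(Q),P-Q\rangle\gtrsim \Big(\int_0^1\sqrt{\phi''(|Q+s(P-Q)|)}\,ds\Big)|P-Q|^2\gtrsim\sqrt{\phi''(|P|+|Q|)}\,|P-Q|^2 .
\]
The bound $\int_0^1\sqrt{\phi''(\cdot)}\,ds\gtrsim\sqrt{\phi''(|P|+|Q|)}$ comes from the same fraction-of-the-segment argument. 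The lower bound for $|V(P)-V(Q)|^2$ then follows from Cauchy--Schwarz.
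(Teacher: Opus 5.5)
Your proof is correct. The paper does not prove Lemma~\ref{lem:hammer}; it cites \cite{DieEtt08} and \cite{DieForTomWan19}. Your argument is a self-contained version of the standard proof given there:
\begin{itemize}
\item reduce to $\phi''(|P|+|Q|)\eqsim\phi'(|P|\vee|Q|)/(|P|\vee|Q|)$ using the indices;
\item pass to shifted functions with Lemma~\ref{lem:shifted-N-functions};
\item handle $V$ by the integral representation along the segment plus an Acerbi--Fusco type estimate, and get the lower bound from monotonicity and Cauchy--Schwarz.
\end{itemize}
The lower-bound step works because the two eigenvalues $g$ and $g+tg'$ of the symmetric map $D_PV$ are positive and both comparable to $\sqrt{\phi''}$.

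The citation buys brevity. Your version buys transparency about which hypotheses are actually used, and you should make that explicit. You work with \eqref{ip1} and \eqref{ip2}, not with \eqref{ip1reg} and \eqref{ip2reg1}, which are the hypotheses named in the lemma. This is necessary, not a weakness: \eqref{a-Lipschitz} and the upper half of \eqref{a-coercivity-2} fail under \eqref{ip1reg} and \eqref{ip2reg1} alone. For a counterexample, take $\phi(t)=t^2/2$ and $A(P)=h(|P|)P/|P|$ with $h(t)=t+\beta(t)$. Here $\beta$ is smooth and nondecreasing, vanishes on $[0,1]$, and equals $1$ on $[1+\delta,\infty)$. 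Both hypotheses hold with constants independent of $\delta$. Yet for collinear $P,Q$ with $|Q|=1$ and $|P|=1+\delta$ you get $|A(P)-A(Q)|=1+\delta$, while $\phi'_{|Q|}(|P-Q|)=\delta$.

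Two minor repairs in the write-up:
\begin{itemize}
\item The display bounding $|V(P)-V(Q)|^2$ should carry $\int_0^1\phi''(|Q+s(P-Q)|)\,ds$ without the outer square, since that is what Jensen gives.
\item For $i_\phi<2$, the upper half of the Acerbi--Fusco estimate does not come from the fraction-of-segment argument. It comes from $\int_0^1|Q+s(P-Q)|^{i_\phi-2}\,ds\le c\,(|P|\vee|Q|)^{i_\phi-2}$, valid because $2-i_\phi<1$, combined with the monotonicity of $\phi'(t)/t^{i_\phi-1}$. The same integrability justifies the fundamental theorem of calculus when the segment passes through the origin.
\end{itemize}
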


\bigskip

%****************************************************
\subsection{Orlicz spaces}
%****************************************************

Let $\phi$ be a Young function. The  Orlicz space $L^\phi(\Omega)$ is defined as 
\begin{align*}
    \label{orlicz}
    L^\phi (\Omega)= \bigg\{u: \Omega \to \mathbb R^n: \exists \lambda >0\,\,\text{s.t.} \,\, \int_\Omega\phi\bigg(\frac{|u|}{\lambda}\bigg)\, \d x <\infty\bigg\}
\end{align*}
and it becomes a Banach space with the Luxemburg norm
\begin{equation*}
  \|u\|_{L^\phi(\Omega,\R^n)}:=\inf\bigg\{\lambda>0\colon \int_\Omega
  \phi\bigg(\frac{|u(x)|}{\lambda}\bigg)\dx\le 1\bigg\}.
\end{equation*}
The Orlicz-Sobolev space $W^{1,\phi}(\Omega,\R^n)$ is defined as the space of
functions $u\in L^\phi(\Omega,\R^n)$ that are weakly differentiable
with $Du\in  L^\phi(\Omega,\R^{n\times n})$. It is  equipped with the
norm
$$\|u\|_{W^{1,\phi}(\Omega,\R^n)}:=\|u\|_{L^{\phi}(\Omega,\R^n)}+\|Du\|_{L^{\phi}(\Omega,\R^{n\times n})}.$$ 
 We refer to the space $W^{1,\phi}_0(\Omega,\R^n)$ 
of functions in $W^{1,\phi} (\Omega, \mathbb R^n)$ vanishing in $\partial \Omega$ as the subspace of functions in
$ W^{1,\phi} (\Omega, \mathbb R^n)$  whose  extension  by $0$ to $\mathbb R^n\setminus \Omega$ is weakly differentiable in $\mathbb R^n $.
\iffalse
Finally, we recall the definition of the local Besov space  $B^{\sigma, \phi, \infty}_{\rm loc}(\Omega, \mathbb R^N)$ 
as 
\begin{align}\label{besov}
B^{\sigma, \phi, \infty}_{\rm loc}(\Omega, \mathbb R^N)= \bigg\{v\in L^\phi_{\rm loc}(\Omega, \mathbb R^N): \forall\,B_R\Subset  \Omega\,\exists\, \lambda, \delta>0 \, \text{s.t.}\sup_{|h|<\delta }    \int_{B_R}\phi\bigg(\bigg|\frac{\tau_hv}{\lambda |h|^\sigma}\bigg|\bigg)\, \d x < \infty\bigg\}.
\end{align}
\fi
\noindent We conclude the Section recalling a  Korn-type inequality in the Orlicz setting (see
\cite[Thms. 6.10, 6.13]{DRS:2010}) and a modular Poincar\'e inequality,
whose proof can be found for example in  \cite{GiaPasCC,FS90}  or in \cite{Tal,BaLe} for  more general domains. 
\begin{lem}\label{Lemma-Korn}
  Let $\phi$ be a Young function satisfying \eqref{indices3}  and let $B_\rho(x_o)$ be a ball in
  $\R^n$. Then, for any $u\in
  W^{1,\phi}_0(B_\rho(x_o),\R^n)$ we have the estimate
  \begin{equation}
    \label{korn-0}
    \int_{B_\rho(x_o)}\phi(|Du|)\,\dx
    \le
    c\int_{B_\rho(x_o)}\phi(|\E u|)\dx,
  \end{equation}
  where the constant $c$ depends only on $n$, $\Delta_2(\phi)$,
  and $\Delta_2(\phi^\ast)$.
\end{lem}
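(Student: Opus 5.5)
The approach is to represent $Du$ as a singular integral of $\E u$ and then transfer the $L^p$ boundedness of Calderón--Zygmund operators to the modular inequality \eqref{korn-0} by interpolation. No estimate depending on the radius is needed, since the representation lives on the whole space.

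\emph{Reduction to smooth compactly supported maps.} Because $\phi$ and $\phi^\ast$ satisfy the $\Delta_2$ condition (a consequence of \eqref{indices3}), $C^\infty_c(B_\rho(x_o),\R^n)$ is dense in $W^{1,\phi}_0(B_\rho(x_o),\R^n)$. Convergence in norm then implies convergence of the modulars.
\begin{itemize}
\item It suffices to prove \eqref{korn-0} for $u\in C^\infty_c$.
\item For general $u$ one takes $u_k\to u$, so that $\E u_k\to\E u$ in $L^\phi$ and hence $\int\phi(|\E u_k|)\to\int\phi(|\E u|)$.
\item Along a subsequence $Du_k\to Du$ a.e., and Fatou's lemma concludes.
\item Extending by zero, we may integrate over $\R^n$ instead of $B_\rho(x_o)$.
\end{itemize}

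\emph{Algebraic identity.} For smooth $u$ one has
\[
\partial_j\partial_k u_i=\partial_j(\E u)_{ik}+\partial_k(\E u)_{ij}-\partial_i(\E u)_{jk},
\]
and in particular $\Delta u_i=2\sum_j\partial_j(\E u)_{ij}-\partial_i\,\mathrm{tr}(\E u)$.
\begin{itemize}
\item Inverting the Laplacian on compactly supported smooth data gives
\[
\partial_k u_i=-\sum_j 2R_kR_j(\E u)_{ij}+R_kR_i\,\mathrm{tr}(\E u),
\]
where $R_j$ are the Riesz transforms (up to a harmless sign convention).
\item To justify this, note that both sides are $L^2$ functions with the same Fourier transform. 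Indeed $\widehat{\partial_k u_i}=i\xi_k\hat u_i$, and $\hat u_i$ is recovered from $|\xi|^2\hat u_i$.
\end{itemize}
Hence $Du=T(\E u)$, where $T$ is a fixed linear combination of the operators $R_kR_j$. In particular $T$ is a linear Calderón--Zygmund operator, bounded on $L^p(\R^n)$ for every $1<p<\infty$ with constant $c(n,p)$.

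\emph{Modular interpolation.} Choose exponents $1<p_0<i_\phi\le s_\phi<p_1<\infty$, for instance $p_0=(1+i_\phi)/2$ and $p_1=2s_\phi$. For $g=\E u$ and a level $t>0$, split
\[
g=g\chi_{\{|g|>t\}}+g\chi_{\{|g|\le t\}}.
\]
By linearity and the strong $(p_0,p_0)$ and $(p_1,p_1)$ bounds, together with Chebyshev,
\[
|\{|Tg|>2t\}|\le c\,t^{-p_0}\int_{\{|g|>t\}}|g|^{p_0}+c\,t^{-p_1}\int_{\{|g|\le t\}}|g|^{p_1}.
\]
Write $\int\phi(|Tg|)=\int_0^\infty\phi'(s)|\{|Tg|>s\}|\,ds$, substitute $s=2t$, and apply Fubini. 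This reduces matters to bounding two integrals pointwise in $|g(x)|$:
\[
|g|^{p_0}\int_0^{|g|}\phi'(2t)t^{-p_0}\,dt\quad\text{and}\quad|g|^{p_1}\int_{|g|}^\infty\phi'(2t)t^{-p_1}\,dt .
\]
Using \eqref{indices1}, $\phi'(2t)\lesssim\phi(t)/t$, and \eqref{homogeneity-phi}, $\phi(t)\le (t/\tau)^{i_\phi}\phi(\tau)$ for $t\le\tau$ and $\phi(t)\le (t/\tau)^{s_\phi}\phi(\tau)$ for $t\ge\tau$. Since $p_0<i_\phi$ and $p_1>s_\phi$, both integrals are bounded by $c\,\phi(|g|)$. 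Therefore
\[
\int_{\R^n}\phi(|Du|)\le c\int_{\R^n}\phi(|\E u|),
\]
with $c$ depending only on $n$, $i_\phi$ and $s_\phi$, which is equivalent to dependence on $n$, $\Delta_2(\phi)$ and $\Delta_2(\phi^\ast)$.

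\emph{Main obstacle.} I expect the delicate step to be this modular interpolation: one must check carefully that the integrals converge at $0$ and at $\infty$ exactly because $p_0<i_\phi$ and $s_\phi<p_1$. The alternative would be weighted $A_p$ bounds combined with Rubio de Francia extrapolation, which is heavier, so I would try the direct level-set splitting first.
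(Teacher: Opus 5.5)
Your proof is correct, and it takes a genuinely different route from the paper. The paper gives no argument of its own and simply quotes \cite[Thms.~6.10, 6.13]{DRS:2010]}. There the inequality is obtained inside a general theory on bounded John domains, combining a decomposition technique with Orlicz-space bounds for singular-integral-type operators, which yields far more than is needed here.

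You instead use the zero boundary values to pass to $\R^n$ and write $Du=T(\E u)$, with $T$ a fixed combination of double Riesz transforms. You then get the modular bound for $T$ by a direct Marcinkiewicz-type splitting between strong $L^{p_0}$ and $L^{p_1}$ bounds with $p_0<i_\phi\le s_\phi<p_1$. The algebraic identity, its Fourier-side justification, the level-set estimate and both tail integrals all check out, including the convergence at $0$ and $\infty$ you flagged. The constant is visibly independent of $\rho$.

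Your route buys a short, self-contained proof with no weighted theory or extrapolation. The cited route buys generality of the domain, which this statement does not need.

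One small correction concerns the constant. Dependence on $n,i_\phi,s_\phi$ is not ``equivalent'' to dependence on $n,\Delta_2(\phi),\Delta_2(\phi^\ast)$. The indices, defined through $\phi''$, bound the $\Delta_2$ constants, but not conversely. To get the constant exactly as stated, use only what your computation really needs:
\begin{itemize}
\item by convexity, $\phi'(2t)\le\phi(4t)/(2t)\le\Delta_2(\phi)^2\phi(t)/(2t)$;
\item from $\Delta_2(\phi)$, $\phi(\lambda t)\le C\lambda^{q}\phi(t)$ for $\lambda\ge1$;
\item from $\Delta_2(\phi^\ast)$, i.e.\ the $\nabla_2$ condition for $\phi$, $\phi(\lambda t)\le C\lambda^{p}\phi(t)$ for $\lambda\le1$ with some $p>1$.
\end{itemize}
Then choose $1<p_0<p$ and $p_1>q$. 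Your two integrals are bounded exactly as before, with $c=c(n,\Delta_2(\phi),\Delta_2(\phi^\ast))$.
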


\begin{prop}\label{AApoinc}
Let $\phi$ be a Young function satisfying \eqref{indices3} and let $B_R(x_o)$ be a ball in
  $\R^n$. Let $u\in W^{1,\phi}_0(B_R)$,
then there exists a positive constant  $C=C(i_\phi,s_\phi, n, R)$  such that
\begin{equation}\label{Apoinc}
 \int_{B_R} \phi(|u|)\, \dx \leq C
\,  \int_{B_R}\phi(|Du|) \dx.
\end{equation}

\end{prop}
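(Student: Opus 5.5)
The plan is to prove \eqref{Apoinc} first for $u\in C^\infty_c(B_R)$ by a one–dimensional argument combined with Jensen's inequality, and then to extend it to all of $W^{1,\phi}_0(B_R)$ by density. The homogeneity estimate \eqref{homogeneity-phi} will be used at the end to remove the factor $R$ from inside $\phi$.

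\emph{Step 1: smooth functions.} Let $u\in C^\infty_c(B_R)$ and extend it by zero to $\R^n$. Write $x=(x_1,x')$ and let $x_o=(a_1,a')$. For each fixed $x'$, the section $\{x_1:(x_1,x')\in B_R\}$ is an interval $I_{x'}$ of length $\ell\le 2R$, and $u$ vanishes at its left endpoint. Hence
\begin{equation*}
|u(x_1,x')|\le \int_{I_{x'}}|Du(t,x')|\,\d t\le 2R\,\frac1{2R}\int_{a_1-R}^{a_1+R}|Du(t,x')|\,\d t .
\end{equation*}
The last integral is an average over an interval of length exactly $2R$, with $|Du|$ set to zero outside $B_R$. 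Jensen's inequality for the convex function $\phi$ then gives
\begin{equation*}
\phi\Big(\frac{|u(x_1,x')|}{2R}\Big)\le \frac1{2R}\int_{a_1-R}^{a_1+R}\phi(|Du(t,x')|)\,\d t .
\end{equation*}
The right-hand side does not depend on $x_1$. Integrating in $x_1$ over $I_{x'}$ therefore produces a factor at most $2R\cdot\frac{1}{2R}=1$, and integrating in $x'$ (Fubini) yields
\begin{equation*}
\int_{B_R}\phi\Big(\frac{|u|}{2R}\Big)\dx\le\int_{B_R}\phi(|Du|)\,\dx .
\end{equation*}
Applying \eqref{homogeneity-phi} with $\lambda=2R$ and $t=|u|/(2R)$ gives
\begin{equation*}
\phi(|u|)\le\max\{(2R)^{i_\phi},(2R)^{s_\phi}\}\,\phi\big(|u|/(2R)\big),
\end{equation*}
and hence \eqref{Apoinc} with $C=\max\{(2R)^{i_\phi},(2R)^{s_\phi}\}$.

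\emph{Step 2: density.} By \eqref{indices3}, both $\phi$ and $\phi^*$ satisfy the $\Delta_2$ condition. Under this condition $C^\infty_c(B_R)$ is dense in $W^{1,\phi}_0(B_R)$ with respect to the norm: on a ball one can, for instance, dilate $u$ slightly towards the centre and then mollify. Norm convergence together with $\Delta_2$ implies modular convergence. So there is a sequence $u_k\in C^\infty_c(B_R)$ with
\begin{equation*}
\int_{B_R}\phi(|Du_k|)\,\dx\to\int_{B_R}\phi(|Du|)\,\dx,\qquad u_k\to u \ \text{ a.e. (along a subsequence)}.
\end{equation*}
Applying Step 1 to each $u_k$ and using Fatou's lemma on the left-hand side gives \eqref{Apoinc} for $u$.

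The main obstacle I expect is Step 2. One has to justify carefully that, under $\Delta_2$, approximation in the Luxemburg norm yields convergence of the modulars of $Du_k$, and that the definition of $W^{1,\phi}_0$ adopted in the paper (zero extension weakly differentiable) coincides with the closure of $C^\infty_c$. On a ball this identification is standard, via the dilation-plus-mollification argument. The one-dimensional estimate of Step 1 is elementary.
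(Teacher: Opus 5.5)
Your proof is correct. It cannot be compared step by step with the paper, because the paper does not prove this proposition: it only refers to the literature, including references that cover more general domains.

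Your Step 1 is a complete, elementary proof. It uses the fundamental theorem of calculus on lines parallel to $e_1$, then Jensen's inequality on an interval of length $2R$. This is legitimate because $\phi$ is convex, nondecreasing and $\phi(0)=0$, so extending $|Du|$ by zero does no harm. Fubini and \eqref{homogeneity-phi} with $\lambda=2R$ finish the argument.

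Compared with the citation, your route gives three things:
\begin{enumerate}
\item An explicit constant $C=\max\{(2R)^{i_\phi},(2R)^{s_\phi}\}$, which does not even depend on $n$.
\item It shows that only convexity and the $\Delta_2$ condition are really needed. The lower index $i_\phi$ enters only through the form of the constant; for $2R\le 1$, monotonicity alone gives $C=1$.
\item The same slicing works on any bounded open set, with $2R$ replaced by its width in the $x_1$-direction. So you do not lose the generality the paper obtains from its references.
\end{enumerate}
The textbook proofs of such modular inequalities usually pass Jensen's inequality through a Riesz-potential bound for $|u|$ or through symmetrization. Your argument is the one-dimensional version of the same idea, and it is shorter.

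Step 2 is fine as written: density of $C^\infty_c$ on the ball via dilation and mollification, norm-to-modular convergence under $\Delta_2$, then Fatou. You can, however, drop it entirely. The zero extension $\bar u$ of $u$ lies in $W^{1,1}(\R^n)$. Hence it has a representative which, on almost every line parallel to $e_1$, is absolutely continuous, vanishes outside $B_R$, and has classical derivative equal to the weak one a.e. The computation of Step 1 then applies directly to $u$.
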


\bigskip

\section{The approximation}
We argue as in \cite{CKP,DiKa}, by introducing a sequence of problems with a singular higher order perturbation. More precisely, fix a ball $B_R\Subset \Omega$ and let  $k\in \mathbb{N}$ be large enough to have
$$W^{k,2}(B_R,\R^n)\hookrightarrow C^2(B_R,\R^n)$$
and consider the solutions $u_{k,\varepsilon}\in W^{k,2}(B_R, \R^n)$ to the  following family of problems
\begin{eqnarray}\label{probapp}
\begin{cases}\displaystyle{\tilde\varepsilon\int_{B_R} \langle D^{k}u_{k,\varepsilon},D^{k}\psi\rangle\,\dx=-\int_{B_R} \langle A(x,\E u_{k,\varepsilon}),\E \psi\rangle\,\dx+\int_{B_R}\langle f_\varepsilon,\psi\rangle\,dx},\cr\cr
u_{k,\varepsilon}= {u}_\varepsilon \qquad\qquad\text{on}\,\, \partial B_R
\end{cases}
\end{eqnarray}
 for every $\psi\in W^{k,2}_0(B_R,\R^n)$ 
 and where ${u}_\varepsilon$ and $f_\varepsilon$ represent  standard mollification of $u$  and $f$, respectively. Moreover, we choose $\tilde \varepsilon=\tilde \varepsilon(\varepsilon,u)$ such that\begin{equation}\label{sceltaepsilon}\tilde \varepsilon\int_{B_R}|D^k {u}_\varepsilon|^2\,dx\to 0\end{equation}
Note that, by our choice of $k$, we have  $u_{k,\varepsilon}\in C^2(B_R,\R^n).$
The existence and the uniqueness of $u_{k,\varepsilon}$ can be easily proven arguing as in \cite[Lemma 3.3]{DiKa} as well as their higher differentiability, i.e. \begin{equation}
    \label{highdiffreg}u_{k,\varepsilon}\in W^{k+1,2}_{\loc}(B_R,\R^n)
\end{equation} (see \cite[Lemma 3.4]{DiKa})
Therefore, we can use integration by parts in \eqref{probapp} choosing $\psi=D_j\varphi$ with $\varphi\in W^{k+1,2}_0(B_R,\R^n)$ and find that $u_{k,\varepsilon}$ solves the following identity} 
  \begin{equation}\label{equa-parts}
       -\tilde\varepsilon\int_{B_R} \langle D_jD^{k}u_{k,\varepsilon},D^{k}\varphi\rangle\,\dx=\int_{B_R} \langle D_j\big(A(x,\E u_{k,\varepsilon})\big),\E \varphi\rangle\,\dx-\int_{B_R}\langle D_jf_\varepsilon,\varphi\rangle\,dx, \end{equation}
     for every $\varphi\in W^{k,2}_0(B_{r},\R^n)$ 
 and for every $j=1,\dots,n$.  
\\
{The main aim of this section is to prove a uniform higher differentiability estimate for the solutions to the problems \eqref{probapp}. 
More precisely, we have 
\begin{thm} Let $u_{k,\varepsilon}\in \tilde u_\varepsilon+W^{k,2}_0(B_R,\R^n)$  be a solution to \eqref{probapp}. If $f\in W^{1,\phi^*}_{\mathrm{loc}}(\Omega,\mathbb{R}^n)$, then the following estimate holds 
\begin{eqnarray}\label{estdersecenunciato}
   \int_{B_{\rho}}|D( V(\E u_{k,\varepsilon}))|^2\dx&\le& 
     c\left(1+\frac{k^2}{( r- \rho)^2}\right)\int_{B_{ r}}\phi\left(|Du_{k,\varepsilon}|\right)\,\dx+c\int_{B_{ r}}\phi^*(|Df_\varepsilon|)\dx\cr\cr
    &&+\frac{c_k\tilde\varepsilon}{( r-\rho)^{2k}}\int_{B_{ r}} \left(\sum_{i=0}^{k-1}|D^i Du_{k,\varepsilon}|^2\right)\,\dx,
 \end{eqnarray}
 for every pair of concentric balls $B_{\rho}\subset B_{ r}\Subset B_R$, where   the constant $c=c(K,L_1,L_2,i_{\phi}, s_{\phi},n )$ is independent of $k$ and $c_k$ also depends on $k$.
\end{thm}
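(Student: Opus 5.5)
We test the differentiated identity \eqref{equa-parts} with
\[
\varphi=\eta^{2k}D_ju_{k,\varepsilon},
\]
summed over $j=1,\dots,n$. Here $\eta\in C^\infty_0(B_r)$ is a cut-off with $\eta\equiv1$ on $B_\rho$, $0\le\eta\le1$ and $|D\eta|\le c/(r-\rho)$. By \eqref{highdiffreg} and $u_{k,\varepsilon}\in C^2$, this $\varphi$ is admissible. To lighten notation write $u=u_{k,\varepsilon}$. The power $2k$ is chosen so that the Leibniz expansion of $D^k(\eta^{2k}D_ju)$ always leaves at least a factor $\eta^k$.

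\textbf{The $A$-term.} Since $D_j$ commutes with $\E$, we have $\E\varphi=\eta^{2k}D_j\E u+2k\eta^{2k-1}\,\mathrm{sym}(D\eta\otimes D_ju)$. We also have $D_j(A(x,\E u))=D_PA(x,\E u)D_j\E u+D_{x_j}A(x,\E u)$. By \eqref{ip1reg} and \eqref{a-coercivity}, the leading term gives
\[
\int\eta^{2k}\phi''(|\E u|)|D\E u|^2\gtrsim\int\eta^{2k}|D(V(\E u))|^2 .
\]
The remaining terms are bounded as follows.
\begin{itemize}
\item The $x$-derivative terms are controlled by \eqref{iplip}: $\phi'(|\E u|)|D\E u|\eta^{2k}$ and $k\eta^{2k-1}|D\eta|\phi'(|\E u|)|D^2u|$.
\item The cross term is $k\eta^{2k-1}|D\eta|\,|D_PA|\,|D\E u|\,|D^2u|$, with $|D_PA|\lesssim\phi''(|\E u|)$ by \eqref{ip2}.
\end{itemize}
The key algebraic tool is the identity $D_iD_ju^l=D_i(\E u)_{jl}+D_j(\E u)_{il}-D_l(\E u)_{ij}$. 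It gives $|D^2u|\le 3|D\E u|$ pointwise, so every occurrence of $D^2u$ is replaced by $D\E u$. Then Young's inequality in the form $\phi''(|\E u|)|D\E u|\,k|D\eta||Du|\le\delta\phi''|D\E u|^2+c_\delta k^2|D\eta|^2\phi''(|\E u|)|Du|^2$ applies. The awkward quantity $\phi''(|\E u|)|Du|^2$ is not pointwise bounded by $\phi(|Du|)$, since $|Du|$ may exceed $|\E u|$.

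\textbf{Main obstacle: removing $D^2u$ from the cross terms.} The pointwise problem above is where I expect the real difficulty. I would try to avoid it by moving the derivative off $D^2u$ in the cross terms. Concretely, rewrite
\[
\int\eta^{2k-1}\langle D_j A,\mathrm{sym}(D\eta\otimes D_ju)\rangle
\]
by integrating $D_j$ by parts back onto $A(x,\E u)$. This produces only terms of the form $|A(x,\E u)|\,|D^2u|\,|D\eta|\eta^{2k-1}$ and $|A|\,|Du|\,|D^2(\eta^{2k-1}D\eta)|$.

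By \eqref{ip2reg1}, $|A|\le L_2\phi'(|\E u|)$. Combined with $|D^2u|\lesssim|D\E u|$, the estimate $\phi'(t)^2/\phi''(t)\lesssim\phi(t)$ (from \eqref{indices}) and Young's inequality, this gives
\[
\delta\int\eta^{2k}\phi''(|\E u|)|D\E u|^2+c\,k^2(r-\rho)^{-2}\int\phi(|Du|).
\]
The terms with $\phi'(|\E u|)|Du|$ are handled by \eqref{eq:young2} together with $\phi(|\E u|)\le c\phi(|Du|)$. The $D_x A$ terms are treated the same way, yielding $\int_{B_r}\phi(|Du|)$. The first of them is absorbed via $\phi'^2/\phi''\lesssim\phi$, and this contributes the constant $1$ in the factor $1+k^2/(r-\rho)^2$.

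\textbf{The remaining terms.} For the force term, integrate by parts once:
\[
\int D_jf_\varepsilon\cdot\eta^{2k}D_ju\le\int_{B_r}\phi^*(|Df_\varepsilon|)+\int_{B_r}\phi(|Du|),
\]
by \eqref{Young-psi}.

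For the perturbation, write $-\tilde\varepsilon\int\langle D_jD^ku,D^k(\eta^{2k}D_ju)\rangle$. The top-order part equals $-\tilde\varepsilon\int\eta^{2k}|D^kDu|^2\le0$, so it has a good sign. The lower-order part is $\tilde\varepsilon\sum_{m\ge1}c_{k,m}\int|D^kDu|\,|D^m\eta^{2k}|\,|D^{k-m}Du|$, with $|D^m\eta^{2k}|\le c_k\eta^{2k-m}(r-\rho)^{-m}$. Young's inequality absorbs half into $\tilde\varepsilon\int\eta^{2k}|D^{k+1}u|^2$. What remains is $c_k\tilde\varepsilon(r-\rho)^{-2k}\sum_{i<k}\int_{B_r}|D^iDu|^2$, with $r-\rho\le1$ used to unify the powers.

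Finally, absorb the $\delta$-terms on the left and use $\eta=1$ on $B_\rho$ to obtain \eqref{estdersecenunciato}. Note that the constant $c$ collects only $\phi$-structure constants and no $k$-dependence other than the explicit $k^2$.
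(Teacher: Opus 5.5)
Your proposal is correct and follows the paper's proof. It uses the same test function $\eta^{2k}D_ju_{k,\varepsilon}$ in \eqref{equa-parts} and the same absorption of the perturbation term $\mathrm{J}$. Crucially, it uses the same device for the cut-off term $\mathrm{II}$: integrate it by parts back onto $A(x,\E u_{k,\varepsilon})$, then write $D_j^2u_{k,\varepsilon}$ in terms of $D\E u_{k,\varepsilon}$ via the algebraic identity. This is exactly how the paper avoids the quantity $\phi''(|\E u|)|Du|^2$ you rightly flagged, although the paper does so without comment. (Two minor slips: the $D_xA$ term paired with $D\eta$ contains $|Du|$, not $|D^2u|$, and after integration by parts the cut-off factor is the first derivative $D(\eta^{2k-1}D\eta)$, not $D^2(\cdot)$.)
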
}
\begin{proof}Fix radii $0<\rho\le r <R $ and a smooth cut-off function $\eta$ satisfying $\mathbf{1}_{B_{\rho}} \le \eta\le \mathbf{1}_{B_{r}}$ and $|D^i\eta|\le \frac{c}{( r-\rho)^i}$ for each $i\in \mathbb{N}$. Without loss of generality, in what follows, we assume $0<R\le 1$.
  By property~\eqref{highdiffreg} of $u_{k,\varepsilon}$, we have
  $\varphi=\eta^{2k}D_ju_{k,\varepsilon}\in W^{k,2}_0(B_{r},\R^n) $, which makes it admissible as
  a test function in \eqref{equa-parts}. With this choice, for each direction $1\le j\le n$ we get
\begin{eqnarray}\label{inizio}
  0&=&-\tilde\varepsilon\int_{B_r} \langle D_jD^{k}u_{k,\varepsilon},D^{k}(\eta^{2k}D_j u_{k,\varepsilon})\rangle\,\dx- \int_{B_r} \eta^{2k}\langle D_j\big(A(x,\E u_{k,\varepsilon})\big),\E(D_j u_{k,\varepsilon})\rangle\,\dx\cr\cr
  &&-2k\int_{B_r} \eta^{2k-1}\langle D_j\big(A(x,\E u_{k,\varepsilon})\big),D_j u_{k,\varepsilon}\nabla\eta\rangle\,\dx+\int_{B_r} \eta^{2k}\langle D_jf_\varepsilon, D_j u_{k,\varepsilon}\rangle\,\dx\cr\cr
  &=:&\mathrm{J}+\mathrm{I}+\mathrm{II}+\mathrm{III}.
\end{eqnarray}
In order to estimate $\mathrm{J}$, we use   the chain rule, Cauchy–Schwarz inequality, the properties of $\eta$ so that  
\begin{eqnarray}\label{J}
    \mathrm{J} &\le & -\tilde\varepsilon\int_{B_r} \eta^{2k}|D_jD^ku_{k,\varepsilon}|^2\,\dx+\frac{c_k\tilde\varepsilon}{(r-\rho)^k}\int_{B_r} \eta^{k}|D_jD^ku_{k,\varepsilon}|\sum_{i=0}^{k-1}|D^i D_ju_{k,\varepsilon}|\,\dx\cr\cr 
    &\le & -\frac{2\tilde\varepsilon}{3}\int_{B_r} \eta^{2k}|D_jD^ku_{k,\varepsilon}|^2\,dx+\frac{c_k\tilde\varepsilon}{(r-\rho)^{2k}}\int_{B_r} \left(\sum_{i=0}^{k-1}|D^i D_ju_{k,\varepsilon}|\right)^2\,\dx\cr\cr
    &\le & -\frac{2\tilde\varepsilon}{3}\int_{B_r} \eta^{2k}|D_jD^ku_{k,\varepsilon}|^2\,\dx+\frac{c_k\tilde\varepsilon}{(r-\rho)^{2k}}\int_{B_r} \sum_{i=0}^{k-1}|D^i D_ju_{k,\varepsilon}|^2\,\dx,
\end{eqnarray}
where we also used $r-\rho\le 1$.

Calculating the differential of $A(x,\E u_{k,\varepsilon})$, we  write $\mathrm{I}$  as follows
\begin{eqnarray}\label{I}
    \mathrm{I}
    &&=-\int_{B_r}\eta^{2k} \langle D_{ P} A(x,\E u_{k,\varepsilon})D_j \E u_{k,\varepsilon},D_j \E  u_{k,\varepsilon}\rangle\,\dx-\int_{B_r}\eta^{2k} \langle D_{x_j}A(x,\E u_{k,\varepsilon}),D_j \E u_{k,\varepsilon}\rangle\,\dx\cr\cr&&=:-\mathrm{I}_1-\mathrm{I}_2,
\end{eqnarray}
while, an integration by parts  in $\mathrm{II}$ yields
\begin{eqnarray}\label{II}
    \mathrm{II}
    &&=2k\int_{B_r} \langle  A(x,\E u_{k,\varepsilon}),D_j (\eta^{2k-1}D_j  u_{k,\varepsilon}\nabla\eta)\rangle\,\dx\cr\cr&&=2k\int_{B_r}\eta^{2k-1} \langle  A(x,\E u_{k,\varepsilon}),D_j^2u_{k,\varepsilon}\nabla\eta\rangle\,\dx+2k\int_{B_r}\eta^{2k-1} \langle  A(x,\E u_{k,\varepsilon}),D_j  u_{k,\varepsilon}D_j(\nabla\eta)\rangle\,\dx\cr\cr
    &&+2k(2k-1)\int_{B_r}\eta^{2k-2} \langle  A(x,\E u_{k,\varepsilon}),D_j  u_{k,\varepsilon}D_j\eta\nabla\eta\rangle\,\dx\cr\cr 
    &&=:\mathrm{II}_1+\mathrm{II}_2+\mathrm{II}_3.
\end{eqnarray}
Inserting \eqref{I} and \eqref{II} in \eqref{inizio}, we get
$$ \mathrm{I}_1=-\mathrm{I}_2+\mathrm{II}_1+\mathrm{II}_2+\mathrm{II}_3+ \mathrm{III}+\mathrm{J}$$
and so, by \eqref{J}, we infer that 
\begin{eqnarray}\label{iniziale}
    \mathrm{I}_1&\le& |\mathrm{I}_2|+|\mathrm{II}_1|+|\mathrm{II}_2|+|\mathrm{II}_3|+ |\mathrm{III}|-\frac{2\tilde\varepsilon}{3}\int_{B_r} \eta^{2k}|D_jD^ku_{k,\varepsilon}|^2\,\dx\cr\cr 
    &&+\frac{c_k\tilde\varepsilon}{(r-\rho)^{2k}}\int_{B_r} \left(\sum_{i=0}^{k-1}|D^i D_ju_{k,\varepsilon}|^2\right)\,\dx\cr\cr 
    &\le& |\mathrm{I}_2|+|\mathrm{II}_1|+|\mathrm{II}_2|+|\mathrm{II}_3|+ |\mathrm{III}|+\frac{c_k\tilde\varepsilon}{(r-\rho)^{2k}}\int_{B_r} \left(\sum_{i=0}^{k-1}|D^i D_ju_{k,\varepsilon}|^2\right)\,\dx
\end{eqnarray}
According to~\eqref{a-coercivity}, the left-hand side of the previous inequality is bounded from below as
 \begin{equation}
    \label{lower-boundreg}
   \mathrm{I_1}
    \ge
     c(\nu)\int_{B_{r}}\eta^{2k}|D_j( V(\E u_{k,\varepsilon}))|^2\,\dx.
  \end{equation} 
  Using in turn assumption~\eqref{iplip} and Young's inequality, with the help of \eqref{indices},  the equivalence in \eqref{a-coercivity} and \eqref{indices1},
  we have
  \begin{align}\label{pre-I-2-bound}
    |\mathrm{I}_2|
    &\le
      \int_{B_r}\eta^{2k} | D_jA(x,\E u_{k,\varepsilon})|\,|D_j \E u_{k,\varepsilon}|\,\dx\\\nonumber
    &\le
    K\int_{B_{r}}\eta^{2k}\phi'(|\E u_{k,\varepsilon}|)|D_j\E u_{k,\varepsilon}|\,\dx\\\nonumber
    &=
       K\int_{B_{r}}\eta^{2k}\frac{\phi'(|\E u_{k,\varepsilon}|)}{|\E u_{k,\varepsilon}|^{\frac12}}\,|\E u_{k,\varepsilon}|^{\frac12}\,|D_j\E u_{k,\varepsilon}|\,\dx\\\nonumber
    &\le
      \sigma \int_{B_{r}}\eta^{2k}\frac{\phi'(|\E u_{k,\varepsilon}|)}{|\E u_{k,\varepsilon}|}\,|D_j\E u_{k,\varepsilon}|^2\,\dx+
      c_\sigma \int_{B_{r}}\eta^{2k}\phi'(|\E u_{k,\varepsilon}|)\,|\E u_{k,\varepsilon}|\,\dx\\\nonumber&\le
      c\sigma \int_{B_{r}}\eta^{2k}\phi''(|\E u_{k,\varepsilon}|)\,|D_j\E u_{k,\varepsilon}|^2\,\dx+
      c_\sigma \int_{B_{r}}\eta^{2k}\phi'(|\E u_{k,\varepsilon}|)\,|\E u_{k,\varepsilon}|\,\dx\\\nonumber
    &\le
      c\sigma \int_{B_{r}}\eta^{2k}|D_j( V(\E u_{k,\varepsilon}))|^2\,\dx+
      c_\sigma \int_{B_{r}}\phi(|\E u_{k,\varepsilon}|)\,\dx,
  \end{align}
  where $\sigma>0$ is a parameter that will be chosen later.
The assumption~\eqref{ip2reg1} and the properties of $\eta$ also yield 
  \begin{align*}
    |\mathrm{II}_1|
    &\le
      \frac{ck}{r-\rho}\int_{B_r}\eta^{2k-1}\phi'(|\E u_{k,\varepsilon}|)\,|D^2_j  u_{k,\varepsilon}|\,\dx.
    \end{align*}
    Since $u_{k,\varepsilon}\in C^2(B_R)$, we may use the algebraic identity 
  \begin{equation*}\label{identity}
   \frac{\partial^2 u_{k,\varepsilon}^i}{\partial x_\ell\partial x_j }=\frac{\partial}{\partial x_\ell}\E_{ij}u_{k,\varepsilon}+\frac{\partial}{\partial x_j}\E_{i\ell}u_{k,\varepsilon}-\frac{\partial}{\partial x_i}\E_{j\ell}u_{k,\varepsilon}   \end{equation*}
   in the particular case $\ell=j$,
  and deduce from the previous estimate that
  \begin{align}\label{pre-II-bound}
    |\mathrm{II}_1|
    &
      \le
    \frac{ck}{r-\rho}\int_{B_{r}}\eta^{2k-1}\phi'(|\E u_{k,\varepsilon}|)|\,|D(\E  u_{k,\varepsilon})|\,\dx\\\nonumber
    &\le
      c\sigma \int_{B_{r}}\eta^{2k}|D( V(\E u_{k,\varepsilon}))|^2\,\dx+
      \frac{c_\sigma k^2}{(r-\rho)^2}  \int_{B_{r}}\phi(|\E u_{k,\varepsilon}|)\,\dx,
  \end{align}
  where we argued as in \eqref{pre-I-2-bound}.
   Using  assumption~\eqref{ip2reg1}, the properties of $\eta$ and the inequality  \eqref{eq:young2} with $a=0$, we obtain
  \begin{align}\label{pre-I-3-bound}
    |\mathrm{II}_2|+|\mathrm{II}_3|
    &\le
      \frac{c(k)}{(r-\rho)^2}\int_{B_r} \phi'(|\E u_{k,\varepsilon}|)\,|D_j u_{k,\varepsilon}|\,\dx\\\nonumber
    &\le
      \frac{c(k)}{(r-\rho)^2}\int_{B_{r}}\,\phi(|\E u_{k,\varepsilon}|)\,\dx+
      \frac{c (k)}{(r-\rho)^2}  \int_{B_{r}}\phi(|D u_{k,\varepsilon}|)\,\dx.
    \end{align}
 In order to estimate $|\mathrm{III}|$, we use the assumption $f\in W^{1,\phi^*}_{\mathrm{loc}}(\Omega)$ and Young's inequality at  \eqref{Young-psi}, having   
 \begin{align}\label{pre-III-bound}
    |\mathrm{III}|
    \le \int_{B_{r}}|Df_\varepsilon|\cdot|D u_{k,\varepsilon}|\,\,\dx\le 
\int_{B_{r}}\phi\left(|D u_{k,\varepsilon}|\right)\,\dx+\int_{B_{r}}\phi^*(|Df_\varepsilon|)\,\dx,
    \end{align}
     where we also used that $0\le\eta\le 1$.
   
  Inserting the bounds \eqref{lower-boundreg}, \eqref{pre-I-2-bound}, \eqref{pre-II-bound}, \eqref{pre-I-3-bound} and \eqref{pre-III-bound} in \eqref{iniziale}, we get
 \begin{eqnarray*}
   \int_{B_{r}}\eta^{2k}|D_j( V(\E u_{k,\varepsilon}))|^2\,\dx&\le& c\sigma \int_{B_{r}}\eta^{2k}|D_j( V(\E u_{k,\varepsilon}))|^2\,\dx+c\sigma \int_{B_{r}}\eta^{2k}|D( V(\E u_{k,\varepsilon}))|^2\,\dx\cr\cr
&&+c_\sigma\left(1+\frac{k^2}{(r-\rho)^2}\right)\int_{B_{r}}\Big(\phi(|\E u_{k,\varepsilon}|)+\phi\left(|Du_{k,\varepsilon}|\right)\Big)\,\dx\cr\cr
    &&+c\int_{B_{r}}\phi^*(|Df_\varepsilon|)\,\dx+\frac{c_k\tilde\varepsilon}{(r-\rho)^{2k}}\int_{B_r} \left(\sum_{i=0}^{k-1}|D^i D_ju_{k,\varepsilon}|^2\right)\,\dx.
 \end{eqnarray*} 
 Summing  over $j=1,\dots,n$ the previous inequalities, we have
 \begin{eqnarray*}
   \int_{B_{r}}\eta^{2k}|D( V(\E u_{k,\varepsilon}))|^2\,\dx&\le& c\sigma \int_{B_{r}}\eta^{2k}|D( V(\E u_{k,\varepsilon}))|^2\,\dx\cr\cr &&+c_\sigma\left(1+\frac{k^2}{(r-\rho)^2}\right)\int_{B_{r}}\Big(\phi(|\E u_{k,\varepsilon}|)+\phi\left(|Du_{k,\varepsilon}|\right)\Big)\,\dx\cr\cr
    &&+c\int_{B_{r}}\phi^*(|Df_\varepsilon|)\,\dx+\frac{c_k\tilde\varepsilon}{(r-\rho)^{2k}}\int_{B_r} \left(\sum_{i=0}^{k-1}|D^i Du_{k,\varepsilon}|^2\right)\,\dx.
 \end{eqnarray*} 
 Choosing $\sigma$ sufficiently small, we can reabsorb the first integral in the right hand side by the left hand side, thus getting
  \begin{eqnarray*}
   \int_{B_{r}}\eta^{2k}|D( V(\E u_{k,\varepsilon}))|^2\,\dx&\le& 
      c\left(1+\frac{k^2}{(r-\rho)^2}\right)\int_{B_{r}}\Big(\phi(|\E u_{k,\varepsilon}|)\,\dx+\phi\left(|Du_{k,\varepsilon}|\right)\Big)\,\dx\cr\cr
    &&+c\int_{B_{r}}\phi^*(|Df_\varepsilon|)\,\dx+\frac{c_k\tilde\varepsilon}{(r-\rho)^{2k}}\int_{B_r} \left(\sum_{i=1}^{k-1}|D^i Du_{k,\varepsilon}|^2\right)\,\dx.
 \end{eqnarray*}
Recalling that $\eta\equiv 1$ on $B_\rho$, we obtain
the conclusion.
\end{proof}

\bigskip

 \section{Proof of Theorem \ref{main}}
This section is devoted to the proof of our main result. We shall prove that the uniform Sobolev estimates proven in previous section are preserved in passing to the limit. Next we will {show} that the solutions to the approximating problems converge to the solution   $u$ of \eqref{equareg} {that will consequently inherit the same property of higher differentiability}.
\begin{proof}[Proof of Theorem \ref{main}] Choosing $\varphi=u_{k,\varepsilon}- u_{\varepsilon}$ as test function in \eqref{probapp}, we have
\begin{eqnarray*}
  0&=&\tilde\varepsilon\int_{B_R} \langle D^ku_{k,\varepsilon},D^k(u_{k,\varepsilon}- u_{\varepsilon})\rangle\,\dx+ \int_{B_R} \langle A(x,\E u_{k,\varepsilon}),\E(u_{k,\varepsilon}- u_{\varepsilon})\rangle\,\dx\cr\cr
  &&-\int_{B_R} f_\varepsilon\cdot(u_{k,\varepsilon}- u_{\varepsilon})\,\dx\cr\cr
  &=&\tilde\varepsilon\int_{B_R} |D^ku_{k,\varepsilon}|^2\,\dx-\tilde\varepsilon\int_{B_s} \langle D^ku_{k,\varepsilon}, D^k u_{\varepsilon}\rangle\,\dx+ \int_{B_R} \langle A(x,\E u_{k,\varepsilon}),\E u_{k,\varepsilon}\rangle\,\dx\cr\cr
  &&- \int_{B_R} \langle A(x,\E u_{k,\varepsilon}),\E u_{\varepsilon}\rangle\,\dx-\int_{B_R} f_\varepsilon\cdot(u_{k,\varepsilon}- u_{\varepsilon})\,\dx
\end{eqnarray*}
that yields
\begin{eqnarray*}
  &&\tilde\varepsilon\int_{B_R} |D^ku_{k,\varepsilon}|^2\,\dx+ \int_{B_R} \langle A(x,\E u_{k,\varepsilon}),\E u_{k,\varepsilon}\rangle\,\dx\cr\cr
  &\le&\tilde\varepsilon\int_{B_R} | D^ku_{k,\varepsilon}|\,| D^k u_{\varepsilon}|\,\dx+ \int_{B_R} | A(x,\E u_{k,\varepsilon})|\,|\E u_{\varepsilon}|\,\dx\cr\cr
  &&+\int_{B_R} |f_\varepsilon|\,|u_{k,\varepsilon}- u_{\varepsilon}|\,\dx\cr\cr
  &\le&\frac{\tilde\varepsilon}{2}\int_{B_R} | D^ku_{k,\varepsilon}|^2\,\dx+\frac{\tilde\varepsilon}{2}\,\int_{B_R} | D^k u_{\varepsilon}|^2\,\dx\cr\cr &&+ L_2\int_{B_R}  \phi'(\E u_{k,\varepsilon})\,|\E u_{\varepsilon}|\,\dx+\int_{B_R} |f_\varepsilon|\,|u_{k,\varepsilon}- u_{\varepsilon}|\,\dx,
\end{eqnarray*}
where, in the last line, we used Young's inequality and assumption \eqref{ip2reg1}. Reabsorbing the first integral in the right hand side to the left hand side, we get
\begin{eqnarray*}
  &&\frac{\tilde\varepsilon}{2}\int_{B_R} |D^ku_{k,\varepsilon}|^2\,\dx+ \int_{B_R} \langle A(x,\E u_{k,\varepsilon}),\E u_{k,\varepsilon}\rangle\,\dx\cr\cr
  &\le&\frac{\tilde\varepsilon}{2}\,\int_{B_R} | D^k u_{\varepsilon}|^2\,\dx+ L_2\int_{B_R} \phi'(|\E u_{k,\varepsilon}|)\,|\E u_{\varepsilon}|\,\dx\cr\cr &&+\int_{B_R} |f_\varepsilon|\,|u_{k,\varepsilon}- u_{\varepsilon}|\,\dx.
\end{eqnarray*}
 Using the last equivalence in \eqref{seconda} to bound from below the second integral in the left hand side of previous estimate, we infer that
\begin{eqnarray*}
  && \frac{\tilde\varepsilon}{2}\int_{B_R} |D^ku_{k,\varepsilon}|^2\,\dx+c\int_{B_R} \phi(|\E u_{k,\varepsilon}|)\,\dx\cr\cr
  &\le&\frac{\tilde\varepsilon}{2}\,\int_{B_R} | D^k u_{\varepsilon}|^2\,\dx+ L_2\int_{B_R}  \phi'(|\E u_{k,\varepsilon}|)\,|\E u_{\varepsilon}|\,\dx\cr\cr &&+\int_{B_R} |f_\varepsilon|\,|u_{k,\varepsilon}- u_{\varepsilon}|\,\dx.
\end{eqnarray*}
 Applying the first estimate in \eqref{eq:young2} and inequality \eqref{eq:young} with  $a=0$ and using Poincar\'e inequality \eqref{Apoinc}  in the right hand side of previous estimate, we obtain that 
\begin{eqnarray*}
  && \frac{\tilde\varepsilon}{2}\int_{B_R} |D^ku_{k,\varepsilon}|^2\,\dx+c\int_{B_R} \phi(|\E u_{k,\varepsilon}|)\,\dx\cr\cr
  &\le&\frac{\tilde\varepsilon}{2}\,\int_{B_R} | D^k u_{\varepsilon}|^2\,\dx+ \sigma\int_{B_R}  \phi(|\E u_{k,\varepsilon}|)\,\dx+c_\sigma\int_{B_R} \phi(|\E u_{\varepsilon}|)\,\dx\cr\cr
  &&+\sigma\int_{B_R} \phi(|u_{k,\varepsilon}- u_{\varepsilon}|)\,\dx+c_\sigma\int_{B_R} \phi^*(|f_\varepsilon|)\,\dx\cr\cr
  &\le&\frac{\tilde\varepsilon}{2}\,\int_{B_R} | D^k u_{\varepsilon}|^2\,\dx+ \sigma\int_{B_R}  \phi(|\E u_{k,\varepsilon}|)\,\dx+c_\sigma\int_{B_R} \phi(|\E u_{\varepsilon}|)\,\dx\cr\cr &&+\sigma\int_{B_R} \phi(|Du_{k,\varepsilon}-D u_{\varepsilon})|)\,\dx+c_\sigma\int_{B_R} \phi^*(|f_\varepsilon|)\,\dx\cr\cr
&\le&\frac{\tilde\varepsilon}{2}\,\int_{B_R} | D^k u_{\varepsilon}|^2\,\dx+ \sigma\int_{B_R}  \phi(|\E u_{k,\varepsilon}|)\,\dx+c_\sigma\int_{B_R} \phi(|\E u_{\varepsilon}|)\,\dx\cr\cr &&+c\,\sigma\int_{B_R} \phi(|\E (u_{k,\varepsilon}- u_{\varepsilon})|)\,\dx+c_\sigma\int_{B_R} \phi^*(|f_\varepsilon|)\,\dx,
\end{eqnarray*}
where, in the last line, we used  Korn's inequality  \eqref{korn-0}. The use of  \eqref{sub-additivity-phi} to bound the second last term of the previous estimate then yields 
\begin{eqnarray*}
  && \frac{\tilde\varepsilon}{2}\int_{B_R} |D^ku_{k,\varepsilon}|^2\,\dx+c\int_{B_R} \phi(|\E u_{k,\varepsilon}|)\,\dx\cr\cr
&\le&\frac{\tilde\varepsilon}{2}\,\int_{B_R} | D^k u_{\varepsilon}|^2\,\dx+ c\sigma\int_{B_R}  \phi(|\E u_{k,\varepsilon}|)\,\dx+c_\sigma\int_{B_R} \phi(|\E u_{\varepsilon}|)\,\dx\cr\cr &&+c_\sigma\int_{B_R} \phi^*(|f_\varepsilon|)\,\dx.
\end{eqnarray*}
Choosing $\sigma$ sufficiently small, we are legitimate to  reabsorb the second integral in the right hand side by the left hand side and conclude that
\begin{eqnarray}\label{bound}
  && \frac{\tilde\varepsilon}{2}\int_{B_R} |D^ku_{k,\varepsilon}|^2\,\dx+c\int_{B_R} \phi(|\E u_{k,\varepsilon}|)\,\dx\le \frac{\tilde\varepsilon}{2}\,\int_{B_R} | D^k u_{\varepsilon}|^2\,\dx\cr\cr
  &&\qquad+c\int_{B_R} \phi(|\E u_{\varepsilon}|)\,\dx+c\int_{B_R} \phi^*(|f_\varepsilon|)\,\dx.
\end{eqnarray}
At this point, we recall our choice of $\tilde \varepsilon$ in \eqref{sceltaepsilon}  and observe that, since $\tilde u_\varepsilon$ and $f_\varepsilon$ are mollifiers of $u$ and $f$ respectively,   the following equalities hold
\begin{equation}\label{convmollificate}\lim_{\varepsilon\to 0}\int_{B_R}\phi(|\E u_{\varepsilon}-\E u|)\,\dx=0,\end{equation}
and
\begin{equation}\label{convmollificate1}\lim_{\varepsilon\to 0}\int_{B_R}\Big(\phi^*(|D f_{\varepsilon}-D f|)+\phi^*(| f_{\varepsilon}- f|)\Big)\,\dx=0.\end{equation} 
(see \cite{Gossez-studia}).Hence, the right hand side of \eqref{bound} can be uniformly bounded with respect to $\varepsilon$ as follows
\begin{eqnarray}\label{boundunif}
  && \frac{\tilde\varepsilon}{2}\int_{B_R} |D^ku_{k,\varepsilon}|^2\,\dx+c\int_{B_R} \phi(|\E u_{k,\varepsilon}|)\,\dx\cr\cr
  &\le& c\left(1+\int_{B_R} \phi(|\E u|)\,\dx+\int_{B_R} \phi^*(|f|)\,\dx\right).
\end{eqnarray}
Therefore, there exists $v\in {u+}W^{1,\phi}_0(B_R)$ such that
$$\E u_{k,\varepsilon}\rightharpoonup \E v \qquad\qquad \text{in}\,\, L^\phi(B_R),$$
as $\varepsilon\to 0$. On the other hand, recalling that $\{u_{k,\varepsilon}\}$ satisfy \eqref{estdersecenunciato},  estimate \eqref{boundunif}, and  the Gagliardo Nirenberg interpolation inequality, give

\begin{equation}\label{estdersecunif}
   \int_{B_{\rho}}|D( V(\E u_{k,\varepsilon}))|^2\,\dx\le 
     \tilde c\left(1+\int_{B_R}\!\!\!\! \Big({\phi(|D u|)}+\phi^*(|f|)+ \phi^*(|Df|)\Big)\,\dx\right),
 \end{equation}
where $\tilde c$ also depends on  $k,r,\rho$ but is independent of $\varepsilon$. Therefore, by \eqref{estdersecunif}, we deduce  the existence of $w\in W^{1,2}_{\mathrm{loc}}(B_R)$ such that , for $\varepsilon\to 0$, it holds
$$V(\E u_{k,\varepsilon})\rightharpoonup  w \qquad\qquad \text{in}\,\,W^{1,2}_{\mathrm{loc}}(B_R).$$
 Hence, as $\varepsilon\to 0$, we have
$$V(\E u_{k,\varepsilon})\to  w \qquad\qquad \text{in}\,\,L^{2}_{\mathrm{loc}}(B_R)$$
and, up to a subsequence,
{$$V(\E u_{k,\varepsilon})\to  w \quad \text{a.e. in}\,\, B_R.$$} Now, the uniqueness of the weak limit and  the continuity of the function $V(\xi)$ ensure that $$V(\E v)= w \qquad \text{a.e. in}\,\, B_R.$$
Therefore, by the last equivalence in  \eqref{a-coercivity-2}, it follows that
\begin{equation}\label{limv}\E u_{k,\varepsilon}\to \E v \qquad\qquad \text{strongly in}\,\, L^\phi(B_R). 
\end{equation}
Our next aim is to prove that $u=v$ a.e. in $B_R$.  Using the first equivalence in  \eqref{a-coercivity-2}, we get
\color{black}
\begin{eqnarray*}
    &&\int_{B_R}\phi_{|\E \tilde u_\varepsilon|}(|\E u_{k,\varepsilon}-\E \tilde u_\varepsilon|)\,\dx\cr\cr 
    &\le& \int_{B_R}\langle A(x,\E u_{k,\varepsilon})-A(x,\E \tilde u_\varepsilon),\E u_{k,\varepsilon}-\E \tilde u_\varepsilon\rangle\,\dx\cr\cr
	&=&\int_{B_R}\langle A(x,\E u_{k,\varepsilon}),\E u_{k,\varepsilon}-\E \tilde u_\varepsilon\rangle\,\dx-\int_{B_R}\langle A(x,\E u),\E u_{k,\varepsilon}-\E \tilde u_\varepsilon\rangle\,\dx\cr\cr
    &&-\int_{B_R}\langle A(x,\E \tilde u_\varepsilon)-A(x,\E  u),\E u_{k,\varepsilon}-\E \tilde u_\varepsilon\rangle\,\dx\cr\cr
    &=&-\tilde\varepsilon\int_{B_R} \langle D^ku_{k,\varepsilon},D^k( u_{k,\varepsilon}- \tilde u_\varepsilon)\rangle\,\dx+\int_{B_R} f_\varepsilon\cdot( u_{k,\varepsilon}- \tilde u_\varepsilon)\,\dx\cr\cr &&-\int_{B_R} f\cdot( u_{k,\varepsilon}- \tilde u_\varepsilon)\,dx-\int_{B_R}\langle A(x,\E \tilde u_\varepsilon)-A(x,\E  u),\E u_{k,\varepsilon}-\E \tilde u_\varepsilon\rangle\,\dx\cr\cr
    &=&-\tilde\varepsilon\int_{B_R}  |D^ku_{k,\varepsilon}|^2\,dx+\tilde\varepsilon\int_{B_R} \langle D^ku_{k,\varepsilon},D^k \tilde u_\varepsilon\rangle\,\dx\cr\cr &&-\int_{B_R}\langle A(x,\E \tilde u_\varepsilon)-A(x,\E  u),\E u_{k,\varepsilon}-\E \tilde u_\varepsilon\rangle\,\dx+\int_{B_R} (f_\varepsilon-f)\cdot( u_{k,\varepsilon}- \tilde u_\varepsilon)\,\dx,
\end{eqnarray*}
where, in the second last  identity, we used that $u_{k,\varepsilon}$ solves problem \eqref{probapp} and that $u$ solves equation \eqref{weak-equa}.
{Moving the first integral in the right and side to the left hand side of  previous estimate and using} in turn \eqref{a-Lipschitz},  inequalities  \eqref{eq:young2} and  \eqref{eq:young} with $a=0$, the  Poincar\'e inequality at \eqref{Apoinc} and  Korn's inequality \eqref{korn-0},  we deduce that
\begin{eqnarray*}
    &&\int_{B_R}\phi_{|\E \tilde u_\varepsilon|}(|\E u_{k,\varepsilon}-\E \tilde u_\varepsilon|)\,\dx+\tilde\varepsilon\int_{B_R}  |D^ku_{k,\varepsilon}|^2\,\dx\cr\cr 
    &\le& \int_{B_R}  |A(x,\E \tilde u_\varepsilon)-A(x,\E  u)||\E u_{k,\varepsilon}-\E \tilde u_\varepsilon|\,\dx+\tilde\varepsilon\int_{B_R} | D^ku_{k,\varepsilon}||D^k \tilde u_\varepsilon|\,\dx\cr\cr
    &&+\int_{B_R} |f_\varepsilon-f|\cdot| u_{k,\varepsilon}- \tilde u_\varepsilon|\,\dx\cr\cr
    &\le& \int_{B_R} \phi'_{|\E \tilde u_\varepsilon|}(|\E u-\E  u_{\varepsilon}|)|\E u_{k,\varepsilon}-\E \tilde u_\varepsilon|\,\dx+\tilde\varepsilon\int_{B_R} | D^ku_{k,\varepsilon}||D^k \tilde u_\varepsilon|\,\dx\cr\cr&&+\int_{B_R} |f_\varepsilon-f|\cdot| u_{k,\varepsilon}- \tilde u_\varepsilon|\,\dx\cr\cr &\le &
    c\int_{B_R} \phi_{|\E \tilde u_\varepsilon|}\left(|\E u-\E  u_{\varepsilon}|\right)\,\dx+\frac{1}{4}\int_{B_R}\phi_{|\E \tilde u_\varepsilon|}(|\E u_{k,\varepsilon}-\E \tilde u_\varepsilon|)\,\dx\cr\cr &&
    +\frac{\tilde\varepsilon}{2}\int_{B_R}  |D^ku_{k,\varepsilon}|^2\,\dx+\frac{\tilde\varepsilon}{2}\int_{B_R}  |D^k u_{\varepsilon}|^2\,\dx\cr\cr 
    &&+c_\sigma\int_{B_R} \phi^*(|f_\varepsilon-f|)\,\dx+\sigma\int_{B_R}\phi(| u_{k,\varepsilon}- \tilde u_\varepsilon|)\,\dx\cr\cr
    &\le &
    c\int_{B_R} \phi_{|\E \tilde u_\varepsilon|}\left(|\E u-\E  u_{\varepsilon}|\right)\,\dx+\frac{1}{4}\int_{B_R}\phi_{|\E \tilde u_\varepsilon|}(|\E u_{k,\varepsilon}-\E \tilde u_\varepsilon|)\,\dx\cr\cr &&
    +\frac{\tilde\varepsilon}{2}\int_{B_R}  |D^ku_{k,\varepsilon}|^2\,\dx+\frac{\tilde\varepsilon}{2}\int_{B_R}  |D^k u_{\varepsilon}|^2\,\dx\cr\cr 
    &&+c_\sigma\int_{B_R} \phi^*(|f_\varepsilon-f|)\,\dx+c\sigma\int_{B_R}\phi(| Du_{k,\varepsilon}- D\tilde u_\varepsilon|)\,\dx \cr\cr&\le &
    c\int_{B_R} \phi_{|\E \tilde u_\varepsilon|}\left(|\E u-\E  u_{\varepsilon}|\right)\,\dx+\frac{1}{4}\int_{B_R}\phi_{|\E \tilde u_\varepsilon|}(|\E u_{k,\varepsilon}-\E \tilde u_\varepsilon|)\,\dx\cr\cr &&
    +\frac{\tilde\varepsilon}{2}\int_{B_R}  |D^ku_{k,\varepsilon}|^2\,\dx+\frac{\tilde\varepsilon}{2}\int_{B_R}  |D^k u_{\varepsilon}|^2\,\dx\cr\cr 
    &&+c_\sigma\int_{B_R} \phi^*(|f_\varepsilon-f|)\,\dx+c\sigma\int_{B_R}\phi(| \E u_{k,\varepsilon}- \E \tilde u_\varepsilon|)\,\dx.
\end{eqnarray*}  
Reabsorbing the second and the third integral in the right hand side by the left hand side of the previous estimate, we obtain
\begin{eqnarray*}
    &&{\frac{3}{4}}\int_{B_R}\phi_{|\E \tilde u_\varepsilon|}(|\E u_{k,\varepsilon}-\E \tilde u_\varepsilon|)\,\dx+\frac{\tilde\varepsilon}{2}\int_{B_R}  |D^ku_{k,\varepsilon}|^2\,\dx\cr\cr 
    &\le& 
    c\int_{B_R} \phi_{|\E \tilde u_\varepsilon|}\left(|\E u-\E  u_{\varepsilon}|\right)\,\dx+\frac{\tilde\varepsilon}{2}\int_{B_R}  |D^k u_{\varepsilon}|^2\,\dx\cr\cr 
    &&+c_\sigma\int_{B_R} \phi^*(|f_\varepsilon-f|)\,\dx+c\sigma\int_{B_R}\phi(| \E u_{k,\varepsilon}- \E \tilde u_\varepsilon|)\,\dx\cr\cr
    &\le& 
    c\int_{B_R} \phi_{|\E \tilde u_\varepsilon|}\left(|\E u-\E  u_{\varepsilon}|\right)\,\dx+\frac{\tilde\varepsilon}{2}\int_{B_R}  |D^k u_{\varepsilon}|^2\,\dx\cr\cr 
    &&+c_\sigma\int_{B_R} \phi^*(|f_\varepsilon-f|)\,\dx+c_\delta\cdot\sigma\int_{B_R}\phi_{|\E \tilde u_\varepsilon|}(| \E u_{k,\varepsilon}- \E \tilde u_\varepsilon|)\,\dx\cr\cr 
    &&+\delta\int_{B_R}\phi(|\E \tilde u_\varepsilon|)\,\dx,
\end{eqnarray*}
where, in the last inequality, we used \eqref{add-shift}. Choosing $\sigma=\frac{1}{2c_\delta}$, we can reabsorb the second last integral in the right hand side by the left hand side getting
\begin{eqnarray}\label{serve}
    &&{\frac{1}{4}}\int_{B_R}\phi_{|\E \tilde u_\varepsilon|}(|\E u_{k,\varepsilon}-\E \tilde u_\varepsilon|)\,\dx+\frac{\tilde\varepsilon}{2}\int_{B_R}  |D^ku_{k,\varepsilon}|^2\,\dx\cr\cr 
    &\le& 
    c\int_{B_R} \phi_{|\E \tilde u_\varepsilon|}\left(|\E u-\E  u_{\varepsilon}|\right)\,\dx+\frac{\tilde\varepsilon}{2}\int_{B_R}  |D^k u_{\varepsilon}|^2\,\dx\cr\cr 
    &&+c_\delta\int_{B_R} \phi^*(|f_\varepsilon-f|)\,\dx+\delta\int_{B_R}\phi(|\E \tilde u_\varepsilon|)\cr\cr
    &\le& 
    c_\delta\int_{B_R} \phi_{|\E  u|}\left(|\E u-\E  u_{\varepsilon}|\right)\,\dx+\frac{\tilde\varepsilon}{2}\int_{B_R}  |D^k u_{\varepsilon}|^2\,\dx\cr\cr 
    &&+c_\delta\int_{B_R} \phi^*(|f_\varepsilon-f|)\,\dx+\delta\int_{B_R}\phi(|\E \tilde u_\varepsilon|)\,\dx,
\end{eqnarray}
where we used the change of shift in  Lemma \ref{lem:change_of_shift} and \eqref{seconda}. Now, we observe that
\begin{eqnarray}\label{servebis}
    &&\int_{B_R} \phi_{|\E  u|}\left(|\E  u_{k,\varepsilon}-\E  u|\right)\, \dx\cr\cr&\le& c\int_{B_R} \phi_{|\E  u|}\left(|\E  u_{k,\varepsilon}-\E  u_{\varepsilon}|\right)\,\dx+c\int_{B_R} \phi_{|\E  u|}\left(|\E  u_{\varepsilon}-\E u|\right)\,\dx\cr\cr&\le &
 c\int_{B_R}\phi_{|\E \tilde u_\varepsilon|}(|\E u_{k,\varepsilon}-\E \tilde u_\varepsilon|)\,\dx+c\int_{B_R} \phi_{|\E  u|}\left(|\E  u_{\varepsilon}-\E u|\right)\,\dx,\end{eqnarray}
 where we used \eqref{sub-additivity-phi} and again the change of shift together with the last equivalence at \eqref{a-coercivity-2}. Combining \eqref{serve} and \eqref{servebis}, we get
 \begin{eqnarray*}
    &&\int_{B_R} \phi_{|\E  u|}\left(|\E  u_{k,\varepsilon}-\E  u|\right)\, \dx+\tilde\varepsilon\int_{B_R}  |D^ku_{k,\varepsilon}|^2\,\dx\cr\cr &\le &c_\delta\int_{B_R} \phi_{|\E  u|}\left(|\E u-\E  u_{\varepsilon}|\right)\,\dx+c\tilde\varepsilon\int_{B_R}  |D^k u_{\varepsilon}|^2\,\dx\cr\cr&&+c_\delta\int_{B_R} \phi^*(|f_\varepsilon-f|)\,\dx+c\cdot\delta\int_{B_R}\phi(|\E \tilde u_\varepsilon-\E u|)+c\cdot\delta\int_{B_R}\phi(|\E u|)\,\dx.
    \end{eqnarray*}
    By our choice of $\tilde \varepsilon$ at \eqref{sceltaepsilon} and since   \eqref{convmollificate} and \eqref{convmollificate1} hold, taking the limit first as $\varepsilon\to 0$ and then as $\delta\to 0$ in previous estimate,  we deduce that
    \begin{equation}\label{convforte2}\lim_{\varepsilon\to 0}\tilde\varepsilon\int_{B_R}  |D^ku_{k,\varepsilon}|^2\,\dx=0\end{equation}
    and
\begin{equation}\label{secondolimite}\lim_{\varepsilon\to 0}\int_{B_R} \phi_{|\E  u|}\left(|\E  u_{k,\varepsilon}-\E  u|\right)\, \dx=0. 
\end{equation}
As a consequence of Korn's inequality \eqref{korn-0}, from the last formula, we also have
$$\lim_{\varepsilon\to 0}\int_{B_R} \phi_{|\E  u|}\left(|D  u_{k,\varepsilon}-D  u|\right)\, \dx=0. $$
 Now observe that, by \eqref{add-shift}, for every $\delta>0$ there exists a constant $c_\delta>0$ such  that the following inequality holds
  $$ \int_{B_R} \phi\left(|\E  u_{k,\varepsilon}-\E  u|\right)\, \dx\le c_\delta\int_{B_R} \phi_{|\E  u|}\left(|\E  u_{k,\varepsilon}-\E  u|\right)\, \dx+ \delta\int_{B_R}|V(\E u)|^2\,\dx. $$
  Therefore, taking again the limit first as $\varepsilon\to 0$ and then as $\delta\to 0$ , from \eqref{secondolimite} we obtain
  \begin{equation*}\lim_{\varepsilon\to 0}\int_{B_R} \phi\left(|\E  u_{k,\varepsilon}-\E  u|\right)\, \dx=0. 
\end{equation*}
and so, recalling\eqref{limv}, we deduce that $u=v$ a.e. in $B_R$. Observing that
    \begin{equation}\label{convforte}\lim_{\varepsilon\to 0}\int_{B_R}\phi(|\E  u_{k,\varepsilon}|)\,\dx\le c\int_{B_R}\phi(|\E u|)\,\dx,\end{equation}
    and 
     \begin{equation}\label{convfortebis}\lim_{\varepsilon\to 0}\int_{B_R}\phi(|D  u_{k,\varepsilon}|)\,\dx\le c\int_{B_R}\phi(|D u|)\,\dx,\end{equation}
    taking the limit as $\varepsilon\to 0$ in estimate \eqref{estdersecenunciato}, by \eqref{convforte2}, \eqref{convforte} and \eqref{convfortebis}, we get
    \begin{equation*}
   \int_{B_{\rho}}|D( V(\E u))|^2\,\dx\le 
     c\left(1+\frac{k^2}{( r- \rho)^2}\right)\int_{B_{\tilde r}}\phi\left(|Du|\right)\,\dx+ck\int_{B_{ r}}\phi^*(|Df|)\,\dx,
 \end{equation*}
  which gives the conclusion.

 \bigskip
 
  \par\noindent {\bf Data availability statement.} Data sharing not applicable to this article as no datasets were generated or analysed during the current study.

\section*{Compliance with Ethical Standards}
\label{conflicts}

\smallskip
\par\noindent
{\bf Funding}. This research was partly funded by:
 \\ (i)  GNAMPA Project 2025, grant number E5324001950001, ``Regolarità di soluzioni di equazioni paraboliche a crescita nonstandard degeneri'' (F. Giannetti, A. Passarelli di Napoli);
 \\ (ii) GNAMPA Project 2026, grant number E53C25002010001 ``Esistenza e regolarita' per soluzioni di equazioni ellittiche e paraboliche anisotrope''( A. Passarelli di Napoli);
 \\ (iii) GNAMPA Project 2026, grant number E53C25002010001 ``EDP e Applicazioni: Dinamica dei fluidi e teoria spettrale''(F. Giannetti);
\\(iv)   Centro Nazionale per la Mobilità Sostenibile (CN00000023) - Spoke 10 Logistica Merci, grant number E63C22000930007,  funded by PNRR (A. Passarelli di Napoli).

\bigskip
\par\noindent
{\bf Conflict of Interest}. The authors declare that they have no conflict of interest.
\end{proof}

\end{document}